\def\dj{d\kern-0.4em\char"16\kern-0.1em}
\def\Dj{\mbox{\raise0.3ex\hbox{-}\kern-0.4em D}}
\def\be{\begin{equation}}
\def\ee{\end{equation}}
\def\bena{\begin{eqnarray*}}
\def\ena{\end{eqnarray*}}
\def\t{\tau}
\def\s{\sigma}
\def\suml{\sum\limits}
\def\dss{\displaystyle}
\newcommand{\WF}{\operatorname{WF}}
\newcommand{\sing}{\operatorname{singsupp}}
\newcommand{\card}{\operatorname{card}}
 \def\D{\mathcal{D}}
 \def\E{\mathcal{E}}
 \def\Rd{\mathbf{R}^d}
 \def\Z{\mathbf{Z}_+}
\def\N{\mathbf{N}}
\def\lf {\lfloor}
\def\rf{\rfloor}
\newcommand{\supp}{\operatorname{supp}}
\numberwithin{equation}{section}
\newtheorem{te}{Theorem}[section]
\newtheorem{lema}{Lemma}[section]
\newtheorem{prop}{Proposition}[section]
\newtheorem{cor}{Corollary}[section]
\theoremstyle{definition}
\newtheorem{ex}{Example}[section]
\newtheorem{de}{Definition}[section]
\theoremstyle{remark}
\newtheorem{rem}{Remark}[section]
\title{\textbf{Superposition and propagation of singularities for extended Gevrey regularity}}
\author{Stevan Pilipovi\' c}
\address{Department of Mathematics and Informatics,
University of Novi Sad, Novi Sad, Serbia}
\email{stevan.pilipovic@dmi.uns.ac.rs}
\author{Nenad Teofanov}
\address{Department of Mathematics and Informatics,
University of Novi Sad, Novi Sad, Serbia}
\email{nenad.teofanov@dmi.uns.ac.rs}
\author{Filip Tomi\'c}
\address{Faculty of Technical Sciences,
University of Novi Sad, Novi Sad, Serbia}
\email{filip.tomic@uns.ac.rs}
\keywords{Ultradifferentiable functions, Gevrey classes,
ultradistributions, wave front sets}
\subjclass[2000]{46F05, 46E10, 35A18}
\begin{document}
\begin{abstract}
We use sequences which depend on two parameters
to define families of ultradifferentiable functions which contain
Gevrey classes. It is shown that such families
are closed under superposition, and therefore inverse closed as well.
Furthermore, we study partial differential operators
whose coefficients satisfy the extended Gevrey regularity.
To that aim we introduce appropriate wave front sets and derive a theorem on propagation of singularities.
This extends related known results  in the sense that weaker assumptions on the regularity of
the coefficients are imposed.
\end{abstract}

\maketitle

\par

\section{Introduction}

Gevrey classes serve as an important reservoir  of functions in the context
of different aspects of general theory
of linear partial differential operators such as hypoellipticity,
local solvability and propagation of singularities, since they
describe regularities stronger than smoothness and weaker than analyticity \cite{ABM, Herm, Menon}.
For example, the Cauchy problem for weakly hyperbolic linear partial differential
equations (PDEs) is well-posed for certain values of the Gevrey index $t$, while
it is ill-posed in the class of analytic functions, cf. \cite{Chen, Rodino} and the references given there.

\par

Since the union of Gevrey classes is strictly contained in the class of
smooth functions, it is of interest to study intermediate spaces of smooth functions
by introducing appropriate regularity conditions. This is done in \cite{PTT-02} by observing
two-parameter dependent sequences of the form $ \{ p^{\t p^\s} \}_{p \in {\mathbf N}} $,
$ \t > 0 $, $ \s > 1$, instead of the Gevrey sequence $ \{ p!^t \}_{p \in {\mathbf N}} $,
$ t>1$.
The corresponding  families of ultradifferentiable functions, denoted by $ \E_{\t,\s}(U)$, extend Gevrey regularity,
see Section \ref{sec01} for the precise definition. We refer to  \cite{PTT-02, TT} for the main properties of such spaces,
and note that they can be used e.g. in situations when hypoellipticity of a PDE is
better than $C^\infty$ but worse than Gevrey hypoellipticity.
In particular, the space $ \E_{\{1,2\}}(U)$ is recently explicitly used in the study of  strictly hyperbolic equations
to capture the regularity of the coefficients in the space variable (with low regularity in time),
which ensures that the corresponding Cauchy problem is well posed in appropriate solution spaces.
We refer to \cite{CL} for details.

\par

In this paper we give a further insight to the extended Gevrey regularity by
proving the  superposition theorem for $ \E_{\t,\s }(U)$, Theorem \ref{TeoremaKompozicija},
which immediately implies the inverse closedness property.
In the proof we employ a generalized version of Fa\'a di Bruno formula and modified version of \emph{Fa\'a di Bruno property} of the sequences $\dss \Big\{\frac{M_p^{\t,\s}}{p!}\Big\}_{p \in {\mathbf N}}$ (Lemma \ref{modifiedFaa}), following the ideas presented in \cite{RS}.

\par

Another goal of this paper is to derive
propagation of singularities when the coefficients $a_{\alpha}(x)$ of the partial differential operator
$ P(x,D)=\sum_{|\alpha|\leq m}a_{\alpha}(x)D^{\alpha}$ belong to $ \E_{\t,\s}(U)$,
see Theorem \ref{FundamentalTheoremNK}.
Note that analytic coefficients were treated in \cite[Theorem 8.6.1]{HermanderKnjiga},
while \cite[Theorem 1.1]{PTT-02} treats constant coefficients.

It turns out that an additional information is needed in the study of
operators with variable coefficients, since it is not possible to use commutativity properties which hold
true when the coefficients are constants.
The main tools to overcome these difficulties are the inverse closedness property
and careful study of summands in generalized Fa\'a-di Bruno's formula,
which gives rise to an explicit construction of approximate solution in Subsections \ref{subsecrepresNK}.
Apart from this we use a new result in microlocal analysis,
Theorem \ref{TeoremaKarakterizacija} which shows that instead of admissible sequences of cut-off functions used in \cite{PTT-02},
a single cut-off function can be used in the definition of wave-front set  $\WF_{\t,\s}(u)$, $u \in \D'(U)$.
We refer to \cite{PTT-02} for a discussion on different types of wave-front sets in the context of ultradifferentiable functions.

\par

We summarize the paper as follows. In Section \ref{sec01} we discuss regularity conditions related to the sequences of the form
$M_p^{\t,\s}=p^{\t p^{\s}}$, $\t>0$, $\s>1$, $p\in \N $ (cf. \cite{PTT-01,PTT-02,TT}),
and introduce the spaces of ultradifferentiable functions $ \E_{\t,\s}(U)$.
In Section \ref{ChapterWaveFront} we introduce wave front sets $\WF_{\t,\s}(u)$, $u \in \D'(U)$,
in the context of extended Gevrey regularity and explain
enumeration, an important technical tool in our analysis. The main result there is Theorem \ref{TeoremaKarakterizacija}
which offers an equivalent definition of $\WF_{\t,\s}(u)$ to be used further on.
Finally, in Section \ref{SectionNonConstant} we prove the propagation of singularities, Theorem \ref{FundamentalTheoremNK}.
The proof is given in details since it contains new nontrivial observations and facts in comparison with the proof of \cite[Theorem 1.1]{PTT-02}.

\par

\subsection{Notation}
Throughout the paper we use the standard notation for  sets of numbers and spaces of distributions, e.g.
${\bf N}$,$\Z$, ${\bf R}_+$ denote the sets of nonnegative
integers, positive integers, and positive real
numbers, respectively, and Lebesgue spaces over an open set $ \Omega \subset \Rd$ a
re denoted by $ L^p (\Omega) $, $ 1\leq p <\infty$. For $x \in \Rd$
we put  $\langle x \rangle=(1+|x|^2)^{1/2}$.
The integer parts (the floor and the ceiling functions) of  $x\in {\bf R}_+$
are denoted by $\lf x \rf:=\max\{m\in \N\,:\,m\leq x\}$ and $\lceil x \rceil:= \min\{m\in \N\,:\,m\geq x\}$. For a multi-index
$\alpha=(\alpha_1,\dots,\alpha_d)\in {\bf N}^d$ we write
$\partial^{\alpha}=\partial^{\alpha_1}\dots\partial^{\alpha_d}$, $\dss D^{\alpha}= (-i )^{|\alpha|}\partial^\alpha$, and
$|\alpha|=|\alpha_1|+\dots |\alpha_d|$.
Open ball of radius $r>0$ centered at $x_0\in\Rd$ is denoted by $B_r(x_0)$, and $\card A$ denotes the cardinal number of $A$.
The Fourier transform of $u \in L^1 (\Rd) $ is normalized as
$$\dss {\mathcal F }_{x\to \xi}u(x)=\widehat u(\xi)=\int_{\Rd} u(x)e^{-2\pi i \langle x, \xi \rangle}\,dx =\int_{\Rd} u(x)e^{-2\pi i x \xi}\,dx, \;\;\; \xi \in \Rd,$$
and the convolution of $f,g \in L^1 (\Rd) $ is given by $f*g(x)=\int_{\Rd}f(x-y)g(y) dy$.
Both transforms can be extended in different ways.

By $C^{\infty}(K)$
we denote the set of smooth functions
on a regular compact set $K$, and
$\D(U)$ and $\E(U)$ denote test function spaces for
the space of Schwartz distributions $\D'(U)$ , and for the space of compactly supported distributions $\E'(U)$,
respectively.

\par

We will use the Stirling formula: $N!=N^N e^{-N}\sqrt{2\pi N}e^{\theta_N \over 12N}$,
for some $0<\theta_N<1$, $N\in\Z$, and formulas for multinomial coefficients:
\begin{multline}
{|a|\choose a_1,a_2,\dots a_m}:={|a|\choose a_1}{|a|-a_1\choose a_2}\dots{|a|-a_1-\dots-a_{m-2}\choose a_{m-1}}\nonumber\\
=\frac{|a|!}{a_1!a_2!\dots a_m!}
= \sum_{k=1}^m {|a|-1\choose a_1,..., a_k-1,... a_m}\nonumber,
 \label{PascalTriangle}
\end{multline}
where $|a|=a_1+a_2+\dots+a_m$, $a_k\in \N$, $k\leq m$.

\section{Classes ${\E}_{\t, \s}(U)$ and superposition property}
\label{sec01}

In this section we introduce test function spaces denoted by  ${\E}_{\t, \s}(U)$
via defining sequences of the form
$M_p^{\t,\s}=p^{\t p^{\s}}$, $p\in \N $, depending on parameters $\t>0$ and  $\s>1$.
The flexibility obtained by introducing the two-parameter dependence
enables the study of smooth functions which are less regular than the Gevrey functions.
When $\t>1$ and  $\s=1$ we  recapture the Gevrey classes.

The spaces  ${\E}_{\t, \s}(U)$ are already studied in \cite{PTT-01, PTT-02}.
Here we recall their basic properties which are used in the rest of the paper, and collect new results in Subsection
\ref{subsecInverse}.
We employ Komatsu's approach \cite{Komatsuultra1} to spaces of
ultradifferentiable functions. Another widely used approach is that of Braun, Meise, Taylor, Vogt and their
collaborators, see e.g. \cite{BMT} and the recent contribution \cite{RS}.
These two approaches are equivalent in many interesting situations, cf. \cite{LO} for more details.

Essential properties of the defining sequences are
given in the following lemma. We refer to \cite{PTT-01} for the proof.

\begin{lema}
\label{osobineM_p_s}
Let $\tau>0$, $\s>1$ and $M_p^{\tau,\s}=p^{\tau p^{\s}}$, $p\in \Z$, $M_0^{\tau,\s}=1$.
Then there exists an  increasing sequence of positive numbers $C_q$, $q\in \N$, and a constant $C>0$ such that:
\vspace{2mm}\\
\vspace{1mm}
$(M.1)$ $(M_p^{\t,\s})^2\leq M_{p-1}^{\t,\s}M_{p+1}^{\t,\s}$, $p\in \Z$\\
\vspace{1mm}
$\overline{(M.2)}$ $M_{p+q}^{\t,\s}\leq C^{p^{\s}+q^{\s}}M_p^{\t 2^{\s-1},\s}M_q^{\t 2^{\s-1},\s}$, $p,q\in \N$,\\
\vspace{1mm}
$\overline{(M.2)'}$ $M_{p+q}^{\t,\s}\leq C_q^{p^{\s}}M_p^{\t,\s}$, $p,q\in \N$,\\
\vspace{1mm}
$(M.3)'$
$ \displaystyle
\suml_{p=1}^{\infty}\frac{M_{p-1}^{\t,\s}}{M_p^{\t,\s}} <\infty.
$

Moreover, there exist constants $A,B,C>0$ such that
$$
M_p^{\tau,\s}\leq A C^{p^{\s}}{\lf p^{\s}
\rf}!^{\frac{\tau}{\s}}\quad and \quad {\lf p^{\s}  \rf}!^{\frac{\tau}{\s}}\leq B
M_p^{\tau,\s}.
$$
\end{lema}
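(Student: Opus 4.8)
The plan is to pass to logarithms and reduce every assertion to an elementary estimate for the single function $f(x)=\t x^{\s}\log x$, since $\log M_p^{\t,\s}=f(p)$ for $p\ge1$ (and $f(1)=0=\log M_1^{\t,\s}$). The two structural facts that do all the work are the convexity of $f$ and the power convexity inequality $(p+q)^{\s}\le 2^{\s-1}(p^{\s}+q^{\s})$, both consequences of $\s>1$. First I would settle $(M.1)$: the log-convexity $(M_p^{\t,\s})^2\le M_{p-1}^{\t,\s}M_{p+1}^{\t,\s}$ is equivalent to $2f(p)\le f(p-1)+f(p+1)$, so it suffices that $f$ be convex on $[1,\infty)$. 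A direct computation gives $f''(x)=\t x^{\s-2}\big(\s(\s-1)\log x+2\s-1\big)$, which is positive for $x\ge1$; this yields $(M.1)$ for $p\ge2$, the case $p=1$ being the trivial bound $1\le M_2^{\t,\s}$.

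For $\overline{(M.2)}$, after taking logarithms the target reads
$$\t(p+q)^{\s}\log(p+q)\le(p^{\s}+q^{\s})\log C+\t2^{\s-1}\big(p^{\s}\log p+q^{\s}\log q\big).$$
I would first apply power convexity to replace $(p+q)^{\s}$ by $2^{\s-1}(p^{\s}+q^{\s})$, split $\log(p+q)=\log p+\log(1+q/p)$ and symmetrically, and then bound the leftover cross terms
$$p^{\s}\log\!\big(1+\tfrac{q}{p}\big)+q^{\s}\log\!\big(1+\tfrac{p}{q}\big)$$
by a constant multiple of $p^{\s}+q^{\s}$. Taking $p\ge q$ without loss of generality, the first summand is at most $(\log2)(p^{\s}+q^{\s})$, while for the second I would use $\log(p/q)\le p/q$ together with $q^{\s-1}p\le p^{\s}\le p^{\s}+q^{\s}$, so that the whole leftover is $\le(2\log2+1)(p^{\s}+q^{\s})$. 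The choice $\log C=\t2^{\s-1}(2\log2+1)$ then closes the estimate. For $\overline{(M.2)'}$ it is enough to fix $q$ and bound
$$h(p)=\frac{(p+q)^{\s}\log(p+q)-p^{\s}\log p}{p^{\s}}$$
uniformly in $p$; a Taylor expansion shows $h(p)=O\!\big(q\,\tfrac{\log p}{p}\big)\to0$, so $h$ is bounded, and $C_q=\exp\big(\t\sup_p h(p)\big)$ works; monotonicity of $(C_q)$ is arranged by passing to running maxima.

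Property $(M.3)'$ follows from a crude ratio bound. For $p\ge2$, using $(p-1)^{\s}\le p^{\s}$, $\log(p-1)\ge0$, and $\log(1-1/p)\le-1/p$,
$$\frac{M_{p-1}^{\t,\s}}{M_p^{\t,\s}}\le\exp\!\big(\t p^{\s}\log(1-\tfrac1p)\big)\le e^{-\t p^{\s-1}},$$
and since $\s>1$ the series $\sum_p e^{-\t p^{\s-1}}$ converges. Finally, the comparison with $\lf p^{\s}\rf!^{\t/\s}$ is obtained from Stirling's formula applied to $N=\lf p^{\s}\rf$: writing $\frac{\t}{\s}\log N!=\frac{\t}{\s}\big(N\log N-N+\tfrac12\log(2\pi N)+O(1/N)\big)$ and using $\log N\approx\s\log p$, the leading term $\frac{\t}{\s}N\log N$ matches $\log M_p^{\t,\s}=\t p^{\s}\log p$ up to an $O(p^{\s})$ discrepancy, which is exactly what produces the factor $C^{p^{\s}}$.

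I expect the main obstacle to be this two-sided Stirling comparison, where the floor and the subleading terms must be tracked with some care. For the upper estimate $\lf p^{\s}\rf!^{\t/\s}\le B\,M_p^{\t,\s}$ with a genuine constant $B$, the point is that the negative contribution $-\frac{\t}{\s}N$ dominates the positive logarithmic correction $\frac{\t}{2\s}\log(2\pi N)$, so that $\frac{\t}{\s}\log N!\le\frac{\t}{\s}N\log N\le\log M_p^{\t,\s}$ once $p$ is large, with small $p$ absorbed into $B$. The reverse inequality $M_p^{\t,\s}\le A\,C^{p^{\s}}\lf p^{\s}\rf!^{\t/\s}$ requires instead the lower bounds $\lf p^{\s}\rf\ge p^{\s}-1$ and $\log\lf p^{\s}\rf\ge\s\log p-\log2$ to recover $\t p^{\s}\log p$ from $\frac{\t}{\s}N\log N$, with the accumulated errors (including the harmless term $\t\log p\le\t p^{\s}$) collected into the exponential factor $C^{p^{\s}}$ and the multiplicative constant $A$.
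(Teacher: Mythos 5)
Your proof is correct. Note that the paper itself does not prove this lemma at all --- it explicitly defers the proof to the reference \cite{PTT-01} --- so there is no in-paper argument to compare against; your reduction to the convexity of $f(x)=\tau x^{\sigma}\log x$, the power inequality $(p+q)^{\sigma}\le 2^{\sigma-1}(p^{\sigma}+q^{\sigma})$, and Stirling's formula is exactly the kind of elementary computation that establishes these properties, and each of your estimates (the cross-term bound for $\overline{(M.2)}$, the vanishing of $h(p)$ for $\overline{(M.2)'}$, the ratio bound $e^{-\tau p^{\sigma-1}}$ for $(M.3)'$, and the two-sided Stirling comparison) checks out. The only blemish is inherited from the statement itself: $\overline{(M.2)'}$ as written with $p,q\in{\bf N}$ fails at $p=0$, $q\ge 2$, so your implicit restriction to $p\ge 1$ (forced by the use of $\log p$) is in fact the correct reading.
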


Note that
$\dss \frac{M_{p-1}^{\t,\s}}{M_p^{\t,\s}}\leq \frac{1}{(2p)^{\tau (p-1)^{\s-1}}}$.
and
$$
\lf p^{\s} \rf!^{\frac{\t}{\s}}\sim  (2\pi)^{\frac{\t}{2\s}}p^{\frac{\t}{2}}e^{-\frac{\t p^{\s}}{\s}}M_p^{\t,\s} , \quad p\to \infty.
$$

For any given values $\tau,h>0$, $\s>1$ and a regular compact set $K\subset \Rd$, we denote by ${\E}_{\t, {\s},h}(K)$
the Banach space of  functions $\phi \in  C^{\infty}(K)$ such that
\begin{equation} \label{Norma}
\| \phi \|_{{\E}_{\t, {\s},h}(K)}=\sup_{\alpha \in \N^d}\sup_{x\in K}
\frac{|\partial^{\alpha} \phi (x)|}{h^{|\alpha|^{\s}}  M_{|\alpha|} ^{\t,\s} }<\infty.\,
\end{equation}
Obviously,
$$ \displaystyle
{\E}_{\t_1, {\s_1},h_1}(K)\hookrightarrow {\E}_{\t_2,
{\s_2},h_2}(K), \;\;\;
0<h_1\leq h_2, \; 0<\t_1\leq\t_2, \; 1<\s_1\leq \s_2,
$$
where $\hookrightarrow$ denotes the strict and dense inclusion,
and from Lemma \ref{osobineM_p_s} it follows that the norms given by
\eqref{Norma} and
\begin{equation} \label{EkvivalentneNorme}
\|\phi\|^{\sim}_{{\E}_{\tau, {\s},h}(K)}=\sup_{\alpha \in
\N^d}\sup_{x\in K}\frac{|\partial^{\alpha} \phi
(x)|}{h^{|\alpha|^{\s}}\lf|\alpha|^{\s}\rf!^{\frac{\tau}{\s}}}<\infty,\quad
\end{equation}
are equivalent in ${\E}_{\tau, {\s},h}(K)$. Moreover, instead of
$ \sup_{x\in K}|\partial^{\alpha} \phi (x)|$ we may put
$ \| \partial^{\alpha} \phi (x) \|_{L^p (K)},$ $ 1\leq p < \infty$
in \eqref{Norma} and \eqref{EkvivalentneNorme}.

By  ${\D}^K_{\t, \s,h}$ we denote the set of functions from ${\E}_{\t,
\s,h}(K)$ with support contained in $K$.
If  $U$ is an open set $\Rd$ and $ K \subset \subset U$ then we define families of spaces
by introducing the following projective and inductive limit topologies,
\begin{equation*}
\label{NewClassesInd} {\E}_{\{\t,
\s\}}(U)=\varprojlim_{K\subset\subset U}\varinjlim_{h\to
\infty}{\E}_{\t, {\s},h}(K),
\end{equation*}
\begin{equation*}
\label{NewClassesProj} {\E}_{(\t,
\s)}(U)=\varprojlim_{K\subset\subset U}\varprojlim_{h\to 0}{\E}_{\t,
{\s},h}(K),
\end{equation*}
\begin{equation*}
\label{NewClassesInd2} {\D}_{\{\t,
\s\}}(U)=\varinjlim_{K\subset\subset U} {\D}^K_{\{\t, \s\}}
=\varinjlim_{K\subset\subset U} (\varinjlim_{h\to\infty}{\D}^K_{\t,
\s,h})\,,
\end{equation*}
\begin{equation*}
\label{NewClassesProj2} {\D}_{(\t,
\s)}(U)=\varinjlim_{K\subset\subset U} {\D}^K_{(\t, \s)}
=\varinjlim_{K\subset\subset U} (\varprojlim_{h\to 0}{\D}^K_{\t,
\s,h}).
\end{equation*}
We will use abbreviated notation $ \t,\s $ for
$\{\t,\s\}$ or $(\t,\s)$.
The spaces ${\E}_{\t, \s}(U)$, ${\D}^K_{\t, \s}$ and ${\D}_{\t, \s}(U)$
are nuclear, cf. \cite{PTT-01}.

\par

If $ \t > 1 $ and $\s = 1$,
then $ {\E}_{\{\t, 1\}}(U)={\E}_{\{\t\}}(U)$  is the Gevrey class,
and $\D_{\{\t,1\}}(U)=\D_{\{\t\}}(U)$
is its subspace of compactly supported functions in $\E_{\{\t\}}(U)$. If $0<\t\leq 1$ then
$ {\E}_{\t, 1}(U)$ consists of quasianalytic functions. In particular, $\dss
\D_{\t,1}(U)=\{0\}$ when $0<\t\leq 1$, and ${\E}_{\{1, 1\}}(U)= {\E}_{\{1\}}(U)$ is the space of analytic functions on $U$.

The space $ \E_{\{1,2\}}(U)$ appears in the study of  strictly hyperbolic equations
where it describes the regularity of the coefficients in the space variable (with low regularity in time),
which is sufficient  to ensure that the corresponding Cauchy problem is well posed in appropriate solution spaces,
we refer to \cite{CL} for details.

\par

In the following Proposition we
capture the main embedding properties between the above introduced family of spaces.

\begin{prop}
\label{detectposition} \cite{PTT-02} Let $\s_1\geq 1$. Then for every $\s_2>\s_1$
and $\t>0$
\begin{equation*} \label{Theta_S_embedd}
\varinjlim_{\t\to \infty}{\E}_{\t,
{\s_1}}(U)\hookrightarrow \varprojlim_{\t\to 0^+} {\E}_{\t,
{\s_2}}(U).
\end{equation*}
Moreover, if $0<\t_1<\t_2$, then
\be \label{RoumieuBeurling} \E_{\{\t_1,\s\}}(U)\hookrightarrow
\E_{(\t_2,\s)}(U)\hookrightarrow \E_{\{\t_2,\s\}}(U), \;\;\; \s\geq 1,\nonumber \ee
and
$$
\varinjlim_{\t\to \infty}{\E}_{\{\t, {\s}\}}(U)= \varinjlim_{\t\to \infty} {\E}_{(\t, {\s})}(U),
$$
$$
\varprojlim_{\t\to 0^+}{\E}_{\{\t, {\s}\}}(U)= \varprojlim_{\t\to 0^+} {\E}_{(\t, {\s})}(U), \;\;\; \s\geq 1.
$$
\end{prop}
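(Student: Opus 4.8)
The plan is to reduce every inclusion in the statement to a pointwise comparison of the defining sequences $M_p^{\t,\s}=p^{\t p^\s}$ together with the scale factors $h^{|\alpha|^\s}$ appearing in the seminorm \eqref{Norma}, and then to pass to the inductive and projective limits in the standard way. The whole proposition rests on one elementary growth estimate: after taking logarithms, the denominator in \eqref{Norma} is governed by
$$\log\bigl(h^{|\alpha|^\s}M_{|\alpha|}^{\t,\s}\bigr)=|\alpha|^\s\log h+\t\,|\alpha|^\s\log|\alpha|,$$
and the leading term $\t\,|\alpha|^\s\log|\alpha|$ carries an extra logarithmic factor relative to the scale term $|\alpha|^\s\log h$. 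Hence, whenever one exponent $\s$ (or one parameter $\t$, with $\s$ fixed) strictly exceeds the other, the corresponding sequence eventually dominates for \emph{every} choice of the scale constants.

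First I would establish the seminorm comparison underlying the initial embedding: for arbitrary $\t_1,\t_2>0$ with $\s_1<\s_2$ and any prescribed $h_0,h'>0$, there is $C'>0$ with $h_0^{|\alpha|^{\s_1}}M_{|\alpha|}^{\t_1,\s_1}\le C'\,(h')^{|\alpha|^{\s_2}}M_{|\alpha|}^{\t_2,\s_2}$ for all $\alpha$. Indeed, since $\s_2>\s_1$ the term $\t_2|\alpha|^{\s_2}\log|\alpha|$ dominates $|\alpha|^{\s_1}\log h_0+\t_1|\alpha|^{\s_1}\log|\alpha|-|\alpha|^{\s_2}\log h'$ as $|\alpha|\to\infty$ no matter how small $h'$ or how large $h_0$ is, and the finitely many remaining $|\alpha|$ are absorbed into $C'$. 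Thus a bound valid for some $h_0$ (a Roumieu hypothesis, inductive in $h$) upgrades to a bound valid for every $h'$ (a Beurling conclusion, projective in $h$); passing to $\varprojlim_{K\subset\subset U}$ and then taking the union over $\t_1$ and the intersection over $\t_2$ yields $\varinjlim_{\t\to\infty}\E_{\t,\s_1}(U)\hookrightarrow\varprojlim_{\t\to0^+}\E_{\t,\s_2}(U)$.

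For the Roumieu--Beurling chain $\E_{\{\t_1,\s\}}(U)\hookrightarrow\E_{(\t_2,\s)}(U)\hookrightarrow\E_{\{\t_2,\s\}}(U)$ with $\t_1<\t_2$, the second inclusion is immediate, since for fixed indices $\varprojlim_{h\to0}\subseteq\varinjlim_{h\to\infty}$. The first uses the same comparison with $\s$ fixed: the ratio $M_{|\alpha|}^{\t_2,\s}/M_{|\alpha|}^{\t_1,\s}=|\alpha|^{(\t_2-\t_1)|\alpha|^\s}$ grows like $e^{(\t_2-\t_1)|\alpha|^\s\log|\alpha|}$, which again beats any factor $(h_0/h')^{|\alpha|^\s}$, so the same Roumieu-to-Beurling upgrade goes through. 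Finally, the two limit identities are a formal consequence of this chain: inserting $\E_{\{\t_1,\s\}}\subseteq\E_{(\t_2,\s)}\subseteq\E_{\{\t_2,\s\}}$ for $\t_1<\t_2$ shows that the increasing unions as $\t\to\infty$ and the decreasing intersections as $\t\to0^+$ of the Roumieu and Beurling families interleave and therefore coincide.

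The main obstacle is bookkeeping the quantifiers on the scale parameter $h$ correctly at each step, that is, ensuring that a bound holding for \emph{some} $h_0$ converts into a bound holding for \emph{all} prescribed $h'$; it is precisely the extra $\log|\alpha|$ in the leading exponent that makes this conversion possible uniformly, and this is the content one must verify carefully rather than the limit manipulations, which are routine. Beyond continuity of the inclusions, to justify the symbol $\hookrightarrow$ one should record strictness and density: strictness by exhibiting, for $\s_1<\s_2$, a function whose derivatives grow at the rate $M_p^{\t,\s_2}$ but not $M_p^{\t,\s_1}$, and density from the fact that $\D_{\t,\s}(U)$ is contained in, and dense in, each of the spaces involved.
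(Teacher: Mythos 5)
Your argument is correct: the pointwise comparison of the weighted sequences $h^{|\alpha|^{\s}}M_{|\alpha|}^{\t,\s}$ (the extra $\log|\alpha|$ in the exponent absorbing any scale factor, hence upgrading a some-$h$ bound to an every-$h$ bound), followed by the interleaving of the Roumieu and Beurling families, is exactly the standard route, and it is the one taken in \cite{PTT-02}, to which the paper defers for this proposition without reproducing a proof. The only point treated more lightly than in the cited source is the strictness and density implicit in the symbol $\hookrightarrow$, which you correctly flag but only sketch.
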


We conclude that
\begin{equation*}
\label{Theta_S_embedd-2}
{\E}_{\tau_0, {\s_1}}(U)\hookrightarrow \bigcap_{\tau> \tau_0} {\E}_{\tau, {\s_1}}(U)
\hookrightarrow {\E}_{\tau_0, {\s_2}}(U),
\end{equation*}
for any $\tau_0>0$ whenever $\s_2>\s_1\geq 1$, and
in particular,
\begin{equation*}
\label{GevreyNewclass}
\varinjlim_{t\to\infty} \E_{\{t\}}(U)\hookrightarrow {\E}_{\tau, \s}(U)
\hookrightarrow  C^{\infty}(U), \;\;\;
\tau>0, \; \s>1,
\end{equation*}
so that  the regularity in ${\E}_{\tau, \s}(U)$ can be thought of as an extended Gevrey regularity.

\par

Non-quasianalyticity condition $(M.3)'$ provides the existence of
partitions of unity in  $\E_{\{\t,\s\}}(U)$ which we formulate in the next Lemma.

\begin{lema}
\label{teoremaKompaktannosac}
Let $\t>0$ and $\s>1$. Then there exists a compactly supported function $\phi\in {\E_{\{\t,\s\}}}(U)$ such that $0\leq\phi\leq 1$ and $\int_{\Rd}\phi\,dx=1$.
\end{lema}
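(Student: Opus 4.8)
The plan is to build the bump first on the real line and then pass to $\Rd$ by a tensor product, using the classical Fourier-analytic construction adapted to the anomalous weight $h^{|\alpha|^\s}$. On $\bR$ I would pick positive numbers $a_k$ with $\sum_k a_k<\infty$ and set
\[
\widehat{\phi_1}(\xi)=\prod_{k=1}^{\infty}\frac{\sin(2\pi a_k\xi)}{2\pi a_k\xi},
\]
so that $\phi_1$ is the uniform limit of convolutions of the probability densities $\frac{1}{2a_k}\chi_{[-a_k,a_k]}$. Each factor is the Fourier transform of a nonnegative function of integral one, whence $\phi_1\ge 0$, $\int_{\bR}\phi_1\,dx=\widehat{\phi_1}(0)=1$, and $\phi_1$ is supported in $[-R,R]$ with $R=\sum_k a_k$. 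It then remains only to choose the $a_k$ so that $\phi_1\in\E_{\{\t,\s\}}(\bR)$, and finally to normalize.

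The crucial choice is the telescoping one
\[
a_k=h_0^{-(k^\s-(k-1)^\s)}\,\frac{M_{k-1}^{\t,\s}}{M_k^{\t,\s}},\qquad k\in\Z,
\]
for a fixed $h_0\ge 1$. Since $h_0\ge1$ forces $a_k\le M_{k-1}^{\t,\s}/M_k^{\t,\s}$, the non-quasianalyticity condition $(M.3)'$ gives $\sum_k a_k<\infty$, hence compact support. For the derivative bounds I would use $|\sin|\le1$ in the first $N$ factors to get $|\xi|^{N}\,|\widehat{\phi_1}(\xi)|\le (2\pi)^{-N}(a_1\cdots a_N)^{-1}$, and apply the same estimate with $N+2$ in place of $N$ to gain integrability, so that from the inversion formula $\phi_1^{(N)}(x)=\int_{\bR}(2\pi i\xi)^N\widehat{\phi_1}(\xi)e^{2\pi i x\xi}\,d\xi$ one obtains
\[
|\phi_1^{(N)}(x)|\le C\Big((2\pi)^N+\frac{1}{a_1\cdots a_{N+2}}\Big).
\]

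With the telescoping choice the product collapses: since $M_0^{\t,\s}=1$, one finds $a_1\cdots a_{N+2}=h_0^{-(N+2)^\s}/M_{N+2}^{\t,\s}$, hence $(a_1\cdots a_{N+2})^{-1}=h_0^{(N+2)^\s}M_{N+2}^{\t,\s}$. The main point is then to reabsorb the shift from $N+2$ back to $N$: by $\overline{(M.2)'}$ with $q=2$ one has $M_{N+2}^{\t,\s}\le C_2^{N^\s}M_N^{\t,\s}$, while $(N+2)^\s\le 2^\s N^\s$ for $N\ge2$, so that $(a_1\cdots a_{N+2})^{-1}\le (h_0^{2^\s}C_2)^{N^\s}M_N^{\t,\s}$. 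Combining with $(2\pi)^N\le(2\pi)^{N^\s}M_N^{\t,\s}$ yields $|\phi_1^{(N)}(x)|\le C H^{N^\s}M_N^{\t,\s}$ for a suitable $H>0$, i.e. $\phi_1\in\E_{\t,\s,H}(\bR)\subset\E_{\{\t,\s\}}(\bR)$. I expect this reabsorption to be the main obstacle: the weight $h^{|\alpha|^\s}$ forces control of both the gaps $N^\s-(N-1)^\s$ and the two extra factors needed for integrability, and it is precisely the stability properties $\overline{(M.2)},\overline{(M.2)'}$ (rather than a naive $(M.2)$) that let the estimate close with the \emph{same} parameters $\t,\s$.

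Finally I would set $\phi(x)=\prod_{j=1}^d\phi_1(x_j)$, which is nonnegative, compactly supported, and has $\int_{\Rd}\phi\,dx=1$. Moreover $\partial^\alpha\phi=\prod_j\phi_1^{(\alpha_j)}(x_j)$, and the superadditivity $\sum_j\alpha_j^\s\le|\alpha|^\s$ (valid for $\s>1$) gives both $\prod_j h^{\alpha_j^\s}\le h^{|\alpha|^\s}$ and $\prod_j M_{\alpha_j}^{\t,\s}=\prod_j\alpha_j^{\t\alpha_j^\s}\le|\alpha|^{\t\sum_j\alpha_j^\s}\le M_{|\alpha|}^{\t,\s}$, so $\phi\in\E_{\{\t,\s\}}(\Rd)$. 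The normalization $0\le\phi\le1$ is then arranged by a dilation, replacing $\phi$ by $\lambda^{-d}\phi(\lambda^{-1}\cdot)$ with $\lambda$ large: this spreads the unit mass and lowers the maximum below $1$ while leaving the class and the integral unchanged, after which a translation places the (compact) support inside $U$, and restriction gives the desired $\phi\in\E_{\{\t,\s\}}(U)$.
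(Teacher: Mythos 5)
Your construction is correct and is essentially the paper's own proof (which it cites from \cite{PTT-01}): the classical H\"ormander-type infinite convolution of normalized characteristic functions $\frac{1}{2a_k}\chi_{[-a_k,a_k]}$, with widths $a_k$ comparable to the ratios $M_{k-1}^{\t,\s}/M_k^{\t,\s}$ so that $(M.3)'$ yields compact support, and with the shift in the derivative estimates reabsorbed via $\overline{(M.2)'}$, exactly as in your telescoping argument. The only caveat concerns the statement rather than your proof: the final translation of the support into $U$ requires $U$ to contain a sufficiently large cube, which is unavoidable since $0\leq\phi\leq 1$ and $\int_{\Rd}\phi\,dx=1$ force $|\supp\phi|\geq 1$.
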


Compactly supported Gevrey function from $\E_{\{\t\}}(U)$ belong to $ {\D}_{\{\t, \s\}}(U)$.
However, in the proof of Lemma \ref{teoremaKompaktannosac} given in \cite{PTT-01} we
constructed a compactly supported function in ${\D}_{\{\t, \s\}}(U)$  which
does not belong to   ${\D}_{\{t\}}(U)$, for any $t>1$.

\begin{rem}
\label{notCarleman}
Note that the exponent $\s$ which appears in the power of term $h$ in \eqref{Norma}
makes the above definition different from the definition of Carleman class $C^L$, cf. \cite{HermanderKnjiga}.
This difference is essential for many calculations. For example, Carleman classes perform ``stability under differential operators``
since their defining sequences satisfy Komatsu's condition (M.2)'.
However, if $\tau>0$ and $\s>1$ then the sequence $M_p^{\tau,\s}$ does not satisfy  (M.2)'.
\end{rem}

If $\dss P=\sum_{|\alpha|\leq m}a_{\alpha}(x)\partial^{\alpha}$ is a partial differential operator of order $m$ with $a_{\alpha}\in \E_{\t,\s}(U)$, then  $P\,:\,\E_{\t,\s}(U)\to \E_{\t,\s}(U)$ is a continuous linear map with respect to the topology of $\E_{\t,\s}(U)$.
In particular, ${\E}_{\t, \s}(U)$ is closed under pointwise
multiplications and finite order differentiation, see \cite[Theorem 2.1]{TT}.

\par

Let $ \t > 0$, $ \s > 1$, and let $a_{\alpha} \in {\E}_{(\t, \s)}(U)$ (resp. $ a_{\alpha} \in {\E}_{\{\t, \s\}}(U)$) where $U$ is an open set in
$\Rd$. Then
$$
P(x,\partial)=\suml_{|\alpha|=0}^{\infty}a_{\alpha}(x){\partial}^{\alpha}
$$
is of class $(\t,\s)$ (resp. $\{\t,\s\}$) on  $U$ if for every $K\subset\subset U$ there exists constant $L>0$ such that for any $h>0$ there exists $A>0$ (resp. for every $K\subset\subset U$ there exists $h>0$ such that for any $L>0$ there exists $A>0$) such that,
\begin{equation*}
\label{Operatortausigma}
\sup_{x\in K}|\partial^{\beta}a_{\alpha}(x)|\leq A h^{{|\beta|}^{\s}}|\beta|^{\t{|\beta|}^{\s}}\frac{L^{|\alpha|^{\s}}}{|\alpha|^{\t 2^{\s-1}{|\alpha|}^{\s}}},\quad {\alpha,\beta \in \N^d}.
\end{equation*}

If $ \t >1$  and $ \s =1,$ then $ P(x,\partial)$ of class $(\t,1)$ (resp. $\{\t,1\}$) is
Komatsu's ultradifferentiable operator of class $(p!^{\t})$ (resp. $\{p!^{\t}\}$), see \cite{KomatsuNotes}.

The following theorem gives the continuity properties
of such differential operators on  ${\E}_{\tau, \s}(U)$, cf. \cite[Theorem 2.1]{PTT-02} for the proof.

\begin{te}
\label{TeoremaZatvorenostUltraDfOP}
Let $P(x,\partial)$ be a differential operator of class $(\t,\s)$ (resp. $\{\t,\s\}$). Then
\be
P(x,\partial):\quad {\E}_{\t, \s}(U) \longrightarrow {\E}_{\t 2^{\s-1}, \s}(U)\nonumber
\ee
is a continuous linear mapping, and the same holds for
$$
P(x,\partial):\quad \varinjlim_{\t\to \infty}\E_{\t,\s}(U) \longrightarrow \varinjlim_{\t\to \infty}\E_{\t,\s}(U).
$$
\end{te}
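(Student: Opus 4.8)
The plan is to verify the defining seminorm estimate \eqref{Norma} for $P(x,\partial)\phi$ by a direct computation. Fix a regular compact $K\subset\subset U$ and $\phi\in\E_{\t,\s}(U)$, and for a multi-index $\gamma$ expand by the Leibniz rule
\begin{equation*}
\partial^{\gamma}\big(P(x,\partial)\phi\big)=\sum_{\alpha}\sum_{\beta\leq\gamma}\binom{\gamma}{\beta}\,\partial^{\gamma-\beta}a_{\alpha}\,\partial^{\alpha+\beta}\phi .
\end{equation*}
Into this I would insert the coefficient estimate for an operator of class $(\t,\s)$ (resp. $\{\t,\s\}$), which supplies the crucial decaying factor $L^{|\alpha|^{\s}}/M_{|\alpha|}^{\t 2^{\s-1},\s}$, together with the bound $|\partial^{\alpha+\beta}\phi|\leq C\,h_1^{|\alpha+\beta|^{\s}}M_{|\alpha+\beta|}^{\t,\s}$ coming from $\phi\in\E_{\t,\s,h_1}(K)$.

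The heart of the matter is the index $\t 2^{\s-1}$, which is forced by property $\overline{(M.2)}$ of Lemma \ref{osobineM_p_s}. Applying it to the $\phi$-factor gives
\begin{equation*}
M_{|\alpha+\beta|}^{\t,\s}=M_{|\alpha|+|\beta|}^{\t,\s}\leq C^{|\alpha|^{\s}+|\beta|^{\s}}\,M_{|\alpha|}^{\t 2^{\s-1},\s}M_{|\beta|}^{\t 2^{\s-1},\s},
\end{equation*}
and the factor $M_{|\alpha|}^{\t 2^{\s-1},\s}$ now cancels exactly against the denominator from the coefficient estimate. After this cancellation the dependence on $\alpha$ reduces to a factor of the form $(LC\,h_1)^{|\alpha|^{\s}}$, using $h_1^{|\alpha+\beta|^{\s}}\leq h_1^{|\alpha|^{\s}}h_1^{|\beta|^{\s}}$ for $h_1\leq 1$ (valid since $a^{\s}+b^{\s}\leq(a+b)^{\s}$ for $\s\geq 1$); because $\s>1$, the series $\sum_{\alpha}(LC\,h_1)^{|\alpha|^{\s}}$ converges as soon as $LC\,h_1<1$, the number of $\alpha$ with $|\alpha|=n$ growing only polynomially while $(LC h_1)^{n^{\s}}$ decays super-exponentially. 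This is exactly where the infinite order of $P$ is tamed. What remains is the finite sum over $\beta\leq\gamma$ of terms $\binom{\gamma}{\beta}h^{|\gamma-\beta|^{\s}}(Ch_1)^{|\beta|^{\s}}M_{|\gamma-\beta|}^{\t,\s}M_{|\beta|}^{\t 2^{\s-1},\s}$. Using $\t\leq\t 2^{\s-1}$ (so that $M_{|\gamma-\beta|}^{\t,\s}\leq M_{|\gamma-\beta|}^{\t 2^{\s-1},\s}$) and the elementary super-multiplicativity $M_{p}^{\nu,\s}M_{q}^{\nu,\s}\leq M_{p+q}^{\nu,\s}$ (immediate from $p^{\s}+q^{\s}\leq(p+q)^{\s}$) I would recombine $M_{|\gamma-\beta|}^{\t 2^{\s-1},\s}M_{|\beta|}^{\t 2^{\s-1},\s}\leq M_{|\gamma|}^{\t 2^{\s-1},\s}$, since $|\gamma-\beta|+|\beta|=|\gamma|$ for $\beta\leq\gamma$. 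The leftover powers are controlled by $a^{\s}+b^{\s}\geq 2^{1-\s}(a+b)^{\s}$ together with $\sum_{\beta\leq\gamma}\binom{\gamma}{\beta}=2^{|\gamma|}$, and finally $2^{|\gamma|}\leq D\,b^{|\gamma|^{\s}}$ for any $b>1$ (again using $\s>1$) lets one absorb the combinatorial factor into an arbitrarily small constant in the new $h'$. Altogether this yields $|\partial^{\gamma}(P\phi)|\leq C'(h')^{|\gamma|^{\s}}M_{|\gamma|}^{\t 2^{\s-1},\s}$ on $K$, i.e. $P\phi\in\E_{\t 2^{\s-1},\s}(K)$; the same chain exhibits the estimate as a bound $\|P\phi\|\leq C'\|\phi\|$ between the relevant Banach spaces, hence continuity.

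The final step is bookkeeping of the quantifiers and the inductive-limit assertion. In the Beurling case the coefficient index $L$ is fixed by $K$ while $h_1$ ranges over all small positive numbers, so $LC h_1<1$ and any prescribed target $h'$ is attainable; in the Roumieu case one instead fixes the admissible $h_1$ for $\phi$ and exploits that $L$ may be taken arbitrarily small. The last assertion then follows at once: since $\t 2^{\s-1}$ is again a finite index, $\E_{\t 2^{\s-1},\s}(U)\hookrightarrow\varinjlim_{\t\to\infty}\E_{\t,\s}(U)$, so composing the established map with this inclusion gives continuity of $P$ on the union. The main obstacle I anticipate is not any single inequality but the simultaneous tracking of the two scales: one must check that the gain $1/M_{|\alpha|}^{\t 2^{\s-1},\s}$ built into the definition of an operator of class $(\t,\s)$ is matched \emph{exactly} by the loss in $\overline{(M.2)}$, so that the $\alpha$-series converges while the surviving $\beta$-sum still recombines into $M_{|\gamma|}^{\t 2^{\s-1},\s}$ rather than into a worse index.
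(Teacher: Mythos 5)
First, a caveat on the comparison itself: the paper does not prove Theorem \ref{TeoremaZatvorenostUltraDfOP}; it is stated with a citation to \cite[Theorem 2.1]{PTT-02}, so your argument can only be measured against the natural proof of that result. For the first assertion your computation is correct and is essentially that proof: Leibniz expansion, the coefficient bound defining the class $(\t,\s)$ (resp. $\{\t,\s\}$), the splitting $M^{\t,\s}_{|\alpha|+|\beta|}\leq C^{|\alpha|^{\s}+|\beta|^{\s}}M^{\t 2^{\s-1},\s}_{|\alpha|}M^{\t 2^{\s-1},\s}_{|\beta|}$ from $\overline{(M.2)}$ with exact cancellation against the denominator $M^{\t 2^{\s-1},\s}_{|\alpha|}$, convergence of $\sum_{\alpha}(LCh_1)^{|\alpha|^{\s}}$, the recombination $M^{\t 2^{\s-1},\s}_{|\gamma-\beta|}M^{\t 2^{\s-1},\s}_{|\beta|}\leq M^{\t 2^{\s-1},\s}_{|\gamma|}$, and the correct Beurling/Roumieu bookkeeping of $L$, $h$, $h_1$. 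Two points you leave implicit are routine: term-by-term differentiation of the infinite series (justified by the same uniform bounds), and the Roumieu case $h_1>1$, where your superadditivity step should be replaced by $h_1^{|\alpha+\beta|^{\s}}\leq (h_1^{2^{\s-1}})^{|\alpha|^{\s}+|\beta|^{\s}}$.

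The genuine gap is in the last assertion. Composing the established map $\E_{\t,\s}(U)\to\E_{\t 2^{\s-1},\s}(U)$ with the inclusion $\E_{\t 2^{\s-1},\s}(U)\hookrightarrow\varinjlim_{\t\to\infty}\E_{\t,\s}(U)$ yields a map whose \emph{domain} is still $\E_{\t,\s}(U)$, not the inductive limit: the space $\varinjlim_{\t'\to\infty}\E_{\t',\s}(U)=\bigcup_{\t'>0}\E_{\t',\s}(U)$ strictly contains $\E_{\t,\s}(U)$, and a continuous linear map on an inductive limit amounts to a compatible family of continuous maps on every step $\E_{\t',\s}(U)$, including all $\t'>\t$. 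This is not a formality, and your own cancellation mechanism shows exactly why: for $\phi\in\E_{\t',\s}(U)$ with $\t'>\t$, property $\overline{(M.2)}$ produces the factor $M^{\t' 2^{\s-1},\s}_{|\alpha|}$, which no longer cancels the gain $M^{\t 2^{\s-1},\s}_{|\alpha|}$ built into the operator class; the quotient $M^{\t' 2^{\s-1},\s}_{|\alpha|}/M^{\t 2^{\s-1},\s}_{|\alpha|}=|\alpha|^{(\t'-\t)2^{\s-1}|\alpha|^{\s}}$ grows faster than any geometric factor $\rho^{|\alpha|^{\s}}$, so the $\alpha$-series is no longer controlled, no matter how small $L$ is taken. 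Hence the action of a fixed operator of class $(\t,\s)$ on the steps $\E_{\t',\s}(U)$ with $\t'>\t$ requires a separate argument (in effect, one must explain in what sense the coefficient decay is adequate for every step of the limit), and the inductive-limit statement cannot be obtained by merely enlarging the codomain as you do.
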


\subsection{Superposition in $\E_{\t,\s} (U)$}\label{subsecInverse}

In this subsection we prove that the classes $\E_{\t,\s} (U)$,  $\t>0$, $\s>1$, are
stable under \emph{superposition}, and conclude that
they are inverse closed. We  refer to \cite{Mostefa, RS, CarmenAntonio} for related results.
We emphasize here that the inverse-closedness of $\E_{\t,\s}(U)$
plays an essential role in the proof our main result, Theorem \ref{FundamentalTheoremNK}.

\par

Recall, an algebra $\mathcal A$ is {\it inverse-closed} in $C^{\infty}(U)$ if for any $\varphi\in {\mathcal A}$ for which $\varphi(x)\not=0$ on $U$ it follows that $\dss \varphi ^{-1} \in {\mathcal A}$.
It is proved in \cite{Siddiqi} that a Carleman class defined by a sequence $M_p$ is inverse closed in  $C^{\infty}(U)$
if  there exists $C>0$ such that
\be
\label{UsloviInveseClosed}
\left ( \frac{M_p}{p!}\right )^{1/p} \leq C \left ( \frac{M_q}{q!}\right )^{1/q}, \;\;\;  p \leq q,\quad \text{and}\quad \lim_{p\to\infty} M_p^{1/p}=\infty,
\ee
where the condition on the left hand side of \eqref{UsloviInveseClosed} is equivalent to the statement that $ ( M_p /p! )^{1/p} $ is an almost increasing sequence.

The Stirling formula implies that the sequence
$ ( M_p /p! )^{1/p} $ is almost increasing  if and only if
$$
\frac{M_p ^{1/p}}{p} \leq C  \frac{M_q ^{1/q}}{q}, \;\;\;  p \leq q.
$$

For example, $\E_{\{\t\}}(U)$, $ \t \geq 1$ are inverse-closed algebras.

Since  $\dss \Big(\frac{M_p ^{\t,\s}}{p^p}\Big)^{1/p}=p^{\t p^{\s-1}-1}$ when $M_p^{\t,\s}=p^{\t p^{\s}}$, $\t>0$, $\s>1$,
and
$$p^{\t p^{\s-1}-1}<q^{\t q^{\s-1}-1},\quad \lceil (1/\t)^{1/(\s-1)}\rceil < p <q,$$
we conclude that $\dss \Big(\frac{M_p ^{\t,\s}}{p^p}\Big)^{1/p}$ is an almost increasing sequence and
for any choice of indices $ k_i,$ $ i =1, \dots, j,$ and $k=\sum_{i=1}^j k_i,$ we have
\be
\label{NejednakostAIseq}
\frac{M_{k_i}^{\t,\s}}{k_i!}\leq C^{k_i} \left ( \frac{M_{k}^{\t,\s}}{k!} \right)^{k_i/k}, \;\;\;
\text{so that} \;\;\;
\prod_{i=1}^j\frac{M_{k_i}^{\t,\s}}{k_i!}\leq C^k \frac{M_{k}^{\t,\s}}{k!}. \ee
In other words
$$
{\prod_{i=1}^j }k_i^{\t k_i^{\s}}\leq C^{k}\frac{k_1!\cdot\cdot\cdot k_j!}{k!}k^{\t k^{\s}}, \;\;\; k=\sum_{i=1}^j k_i.
$$

The almost increasing property of defining sequences is used in the proofs of inverse closedness in Carleman classes, see
\cite{rudin, Siddiqi, Klotz}.

Instead, we prove more general result on superposition. We will use  Fa\'a di Bruno formula as presented in \cite{Tsoy}.
Let us first fix the notation.
A multiindex $\alpha\in \N^d$ is said to be \emph{decomposed into parts} $p_1,\dots,p_s\in \N^d$ with \emph{multiplicities} $m_1,\dots,m_s\in \N$, respectively, if
\be
\label{oblikParticijaFaa}
\alpha=m_1 p_1+m_2 p_2+\dots+m_s p_s,
\ee where $m_i\in\{0,1,\dots, |\alpha|\}$, $|p_i|\in\{1,\dots,|\alpha|\}$, $i =1,\dots,s$.

\par

If $p_i=(p_{i_1},\dots, p_{i_d})$, $i\in \{1,\dots,s\}$, we put $p_i<p_j$ when $i<j$, that is when there exists $k\in \{1,\dots,d\}$ such that $p_{i_1}=p_{j_1},\dots, p_{i_{k-1}}=p_{j_{k-1}}$ and $p_{i_{k}}<p_{j_{k}}$.

Note that $s\leq |\alpha|$ and the same holds for the \emph{total multiplicity} $m= m_1+\dots+m_s \leq |\alpha|$.

Any decomposition of $ \alpha $ can be therefore identified with the triple  $(s,p,m)$,
and the set of all decompositions of the form \eqref{oblikParticijaFaa} is denoted by $\pi$.
The total number $\card \pi$ of decompositions given by
\eqref{oblikParticijaFaa}  is bounded by $(1+|\alpha|)^{d+2}.$

For smooth functions $f: U\to {\mathbf C}$ and $g: V\to U$, where $U,V$ are open in $\mathbf R$ and  $\Rd$, respectively, the generalized Faa di Bruno formula is given by
\be
\label{GeneralizedFaaDiBruno}
\partial^{\alpha}(f(g))=\alpha! \sum_{(s,p,m)\in \pi} f^{(m)}(g)\prod_{k=1}^{s}\frac{1}{m_k!}\Big(\frac{1}{p_k !}\partial^{p_k} g \Big)^{m_k}.
\ee

\par

We say that the sequence $M_p$, $p\in \N$ of positive numbers satisfies \emph{Fa\'a di Bruno property} if there exist a constant $C>0$ such that for every $j\in \Z$ and $k_i\in \Z$  we have
\be
\label{NejednakostAIseq1}
M_j\prod_{i=1}^j M_{k_i} \leq C^{ \sum_{i=1}^{j} k_i} M_{ \sum_{i=1}^{j} k_i}.
\ee

By \cite[Lemma 2.2]{RS} it follows that if $M_p$, $p\in \N$ satisfies $(M.2)'$ and if  $M_p^{1/p}$ is almost increasing, then $M_p$ satisfies
Fa\'a di Bruno property. Since $M_p^{\t,\s}=p^{\t p^{\s}}$, $\t>0$, $\s>1$, does not satisfy $(M.2)'$ we
first prove a modified version
of Fa\'a di Bruno property for the sequence $\dss \frac{M_p^{\t,\s}}{p!}$, $p\in \N$.

\begin{lema}
\label{modifiedFaa}
Let there be given $\t>0$, $\s>1$ and let $M_p^{\t,\s}=p^{\t p^{\s}}$, $p\in\N$. Then there exist a constant $C>0$
such that for every $j\in \Z$ and
$k_i\in \Z$, $i=1,\dots,j,$ we have
\be
\label{NejednakostAIseq2}
\frac{M_j^{\t,\s}}{j!}\prod_{i=1}^j \frac{M_{k_i}^{\t,\s}}{k_i !} \leq C^{k^{\s}} \frac{M_k^{\t,\s}}{k!},
\ee
where $\sum_{i=1} ^j k_i = k.$
\end{lema}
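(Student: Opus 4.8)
The goal is to establish \eqref{NejednakostAIseq2}, a Faà di Bruno-type inequality for the modified sequence $M_p^{\t,\s}/p!$, where the usual constant $C^{\sum k_i}$ is replaced by $C^{k^{\s}}$ to account for the failure of $(M.2)'$. The plan is to combine two ingredients already assembled in the excerpt. First, the product $\prod_{i=1}^j M_{k_i}^{\t,\s}/k_i!\leq C^k M_k^{\t,\s}/k!$ is exactly the content of \eqref{NejednakostAIseq}, which follows from the almost-increasing property of $(M_p^{\t,\s}/p^p)^{1/p}=p^{\t p^{\s-1}-1}$. Second, we must control the extra factor $M_j^{\t,\s}/j!=j^{\t j^{\s}}/j!$. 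So I would reduce \eqref{NejednakostAIseq2} to showing that this prefactor is itself dominated by $C^{k^{\s}}$ (absorbing the $C^k$ from \eqref{NejednakostAIseq} into the larger $C^{k^{\s}}$, legitimate since $k\leq k^{\s}$ for $\s>1$ and $k\geq 1$).

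First I would dispose of the constraint relating $j$ and $k$. In the intended application to \eqref{GeneralizedFaaDiBruno}, the total multiplicity $m$ (playing the role of $j$) and the $k_i$ satisfy $j=\sum m_i$ and $k=\sum_i k_i$ with each $k_i\geq 1$, so $j\leq k$. I would make this hypothesis explicit (or observe it is implied by the summation ranges, since each $k_i\in\Z$ is a positive integer, forcing $j\leq\sum_{i=1}^j k_i=k$). With $j\leq k$ in hand, the whole estimate becomes a matter of bounding $j^{\t j^{\s}}$ against $M_k^{\t,\s}/M_?$ times $C^{k^{\s}}$.

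The key step is the estimate $M_j^{\t,\s}\leq C^{k^{\s}}$ modulo the factorial bookkeeping — more precisely, I would show $j^{\t j^{\s}}\leq C_1^{k^{\s}}$ whenever $j\leq k$ is \emph{bounded}, but of course $j$ can grow with $k$, so a cruder universal bound is needed. The clean route is to use the Stirling-type comparison from Lemma \ref{osobineM_p_s}, namely $M_p^{\t,\s}\leq A C^{p^{\s}}\lf p^{\s}\rf!^{\t/\s}$ together with $\lf p^{\s}\rf!^{\t/\s}\leq B M_p^{\t,\s}$, to transfer the problem to an inequality among factorials $\lf j^{\s}\rf!\,\lf k_i^{\s}\rf!$ versus $\lf k^{\s}\rf!$. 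Since $j^{\s}+\sum_i k_i^{\s}\leq (\sum_i k_i)^{\s}\cdot C=k^{\s}\cdot C$ by superadditivity of the power $\s>1$ (here $j\leq k$ is what lets the $j^{\s}$ term be absorbed), the subadditivity/superadditivity of $x\mapsto x^{\s}$ yields the desired control: the product of factorials of the parts is bounded by the factorial of (a constant multiple of) the whole, up to a factor $C^{k^{\s}}$.

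The main obstacle I anticipate is the bookkeeping of the $j!$ in the denominator and ensuring the constant $C$ is genuinely uniform in $j$, the $k_i$, and their number. The factor $1/j!$ in $M_j^{\t,\s}/j!$ only helps (it shrinks the left side), so it may simply be bounded by $1$, but one must verify that discarding it does not cost a factor growing worse than $C^{k^{\s}}$; since $j!\geq 1$ this is harmless. The subtler point is the superadditivity inequality $\sum_i k_i^{\s}\leq C\,(\sum_i k_i)^{\s}$: with $\s>1$ one has in fact $\sum_i k_i^{\s}\leq (\sum_i k_i)^{\s}$ directly (no constant needed, by the power-mean / convexity argument for $\s\geq 1$ and $k_i\geq 1$), but incorporating the extra $j^{\s}\leq k^{\s}$ term and the Stirling constants requires choosing $C$ large enough to swallow all of $A$, $B$, and the base-$C$ constants from Lemma \ref{osobineM_p_s} simultaneously. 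I would therefore structure the proof as: (i) reduce via \eqref{NejednakostAIseq} to the single prefactor; (ii) pass to factorials through Lemma \ref{osobineM_p_s}; (iii) apply superadditivity of $x^{\s}$ to collect all exponents into $k^{\s}$; (iv) collect constants.
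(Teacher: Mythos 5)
Your plan has a genuine quantitative gap, and it is visible already in its first step. After applying \eqref{NejednakostAIseq} to the product $\prod_{i=1}^j M_{k_i}^{\t,\s}/k_i!$ alone, you are left having to prove $M_j^{\t,\s}/j!\leq C^{k^{\s}}$ for all $j\leq k$; but for $j$ comparable to $k$ (say $j=k$) this reads $k^{\t k^{\s}}/k!\leq C^{k^{\s}}$, which is false, since $\ln\bigl(k^{\t k^{\s}}/k!\bigr)\sim \t k^{\s}\ln k$ and $k^{\s}\ln k$ grows strictly faster than $k^{\s}$. You half-acknowledge this and fall back on the factorial route, but there the same flaw reappears in sharper form: from your bound $j^{\s}+\sum_i k_i^{\s}\leq 2k^{\s}$ one can conclude at best $\lf j^{\s}\rf!\,\prod_i\lf k_i^{\s}\rf!\leq (2\lf k^{\s}\rf)!$ via $a!\,b!\leq (a+b)!$, and the ratio $(2\lf k^{\s}\rf)!/\lf k^{\s}\rf!$ is bounded below by $\lf k^{\s}\rf^{\lf k^{\s}\rf}$, i.e. is of order $e^{\s k^{\s}\ln k}$. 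A constant multiple in the \emph{argument} of a factorial becomes a factor super-exponential in $k^{\s}$, which cannot be absorbed into $C^{k^{\s}}$; so ``the factorial of a constant multiple of the whole, up to a factor $C^{k^{\s}}$'' does not close the proof.

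The route can be repaired, but it needs a strictly sharper count of exponents: because every $k_i\geq 1$, convexity gives $\sum_i k_i^{\s}\leq (k-j+1)^{\s}+(j-1)$, and since $j^{\s}+(k-j+1)^{\s}\leq k^{\s}+1$ for $1\leq j\leq k$, one gets $j^{\s}+\sum_i k_i^{\s}\leq k^{\s}+k$ — the excess over $k^{\s}$ is only \emph{linear} in $k$. Then $(\lf k^{\s}\rf+k+1)!\leq \lf k^{\s}\rf!\,(3k^{\s})^{k+1}\leq C^{k^{\s}}\lf k^{\s}\rf!$, the multinomial $k!/(j!\prod_i k_i!)\leq k^k\leq C^{k^{\s}}$ handles the denominators, and your steps (ii)--(iv) close. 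The paper's own proof avoids factorial comparisons entirely by a different device: for $j<k$ it sets $I=\{i\,:\,k_i\geq 2\}$ and $\tilde{k_i}=k_i-1$, observes the exact identity $j+\sum_{i\in I}\tilde{k_i}=k$, and applies \eqref{NejednakostAIseq} to the collection $\{j\}\cup\{\tilde{k_i}\}_{i\in I}$ — so the prefactor $M_j^{\t,\s}/j!$ enters the telescoping product as just another factor rather than being estimated separately — and then restores $k_i=\tilde{k_i}+1$ using $\overline{(M.2)'}$ of Lemma \ref{osobineM_p_s}, at total cost $C_1^{\sum_{i\in I}\tilde{k_i}^{\s}}\leq C_1^{(k-j)^{\s}}\leq C_1^{k^{\s}}$. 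That exact identity, which lets the prefactor be absorbed instead of bounded on its own, is precisely the idea your reduction is missing.
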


\begin{proof}
We follow the ideas from  the proof of \cite[Theorem 4.11.]{RS}.

First we note that the assertion is trivial if $j=k$ since then $k_i=1$ for all $1\leq i \leq j$ and therefore
$$
\frac{M_j^{\t,\s}}{j!} \prod_{i=1}^j \frac{M_{k_i}^{\t,\s}}{k_i !} =
\frac{M_k^{\t,\s}}{k!} \Big(\frac{M_1^{\t,\s}}{1!}\Big)^k=\frac{M_k^{\t,\s}}{k!}.
$$

For $j<k$, set $I=\{i\,|\,\,1\leq i\leq j,\,k_i\geq 2\}$ and $\tilde{k_{i}}=k_i -1$, $i\in I$. Note that
\be
\label{NejednakostKomp}
k=\sum_{i=1}^{j}k_i=\sum_{i\in I}k_i+\sum_{i\not\in I,\,1\leq i \leq j}k_i
=\sum_{i\in I}k_i+j-\card I=\sum_{i\in I}\tilde{k_i}+j,
\ee
and since $\dss \Big(\frac{M_p^{\t,\s}}{p!}\Big)^{1/p}$ is almost increasing, then the inequality \eqref{NejednakostAIseq} implies
that
\be
\label{NejednakostKomp1}
\frac{M_j^{\t,\s}}{j!}\prod_{i\in I} \frac{M_{\tilde{k_i}}^{\t,\s}}{\tilde{k_i} !}\leq C^{k} \frac{M_k^{\t,\s}}{k!},
\ee

Moreover, from $\overline{(M.2)'}$  and  $k_i=\tilde{k_i}+1$, $i\in I$, we obtain
\be
\label{NejednakostKomp2}
\frac{M_{k_i}^{\t,\s}}{k_i !} \leq C_1^{\tilde{k_i}^{\s}} \frac{M_{\tilde{k_i}}^{\t,\s}}{\tilde{k_i} !},
\ee
for some constant $C_1>0.$

By combining \eqref{NejednakostKomp}, \eqref{NejednakostKomp1} and \eqref{NejednakostKomp2} we obtain
$$
\frac{M_j^{\t,\s}}{j!} \prod_{i=1}^j \frac{M_{k_i}^{\t,\s}}{k_i !}\leq  \Big(\frac{M_1^{\t,\s}}{1!}\Big)^{j-\card I} \frac{M_j^{\t,\s}}{j!}
\prod_{i\in I} \frac{M_{{k_i}}^{\t,\s}}{{k_i}!}\leq \frac{M_j^{\t,\s}}{j!}\prod_{i\in I}C_1^{\tilde{k_i}^{\s}}
\frac{M_{\tilde{k_i}}^{\t,\s}}{\tilde{k_i} !}
$$
$$
\leq C_1^{(k-j)^{\s}}
\frac{M_j^{\t,\s}}{j!}\prod_{i\in I} \frac{M_{\tilde{k_i}}^{\t,\s}}{\tilde{k_i} !}\leq C_2^{ k^{\s}}\frac{M_k^{\t,\s}}{k!},
$$
for some constant $C_2>0$ and the Lemma is proved.

\end{proof}

The main result of this section reads as follows.

\begin{te}
\label{TeoremaKompozicija}
Let there be given $\t>0$, $\s>1$, and let $U$ and $V$ be open sets in $\mathbf R$ and $\Rd$, respectively.
If  $f\in \E_{\t,\s}(U)$ and
$g\in \E_{\t,\s}(V)$ is such that $g: V\to U$, then $f\circ g\in \E_{\t,\s}(V)$.
\end{te}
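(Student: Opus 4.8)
Write $n:=|\alpha|$. The plan is to localise and then push the generalised Fa\`a di Bruno formula \eqref{GeneralizedFaaDiBruno} through the defining seminorms of $\E_{\t,\s}$. Membership in $\E_{\t,\s}(V)$ is tested on regular compacta, so I fix $K\subset\subset V$; by continuity $g(K)$ lies in a compact subset of $U$, and I may use the estimates for $f$ on $g(K)$ and for $g$ on $K$: with suitable $h_f,h_g>0$ (Roumieu), resp. for every $h_f,h_g>0$ (Beurling), one has $|f^{(m)}(g(x))|\le C_f h_f^{m^\s}M_m^{\t,\s}$ and $|\partial^{p_k}g(x)|\le C_g h_g^{|p_k|^\s}M_{|p_k|}^{\t,\s}$ on $K$. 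Inserting these into \eqref{GeneralizedFaaDiBruno} and indexing a decomposition by $(s,p,m)\in\pi$ as in \eqref{oblikParticijaFaa}, each summand is dominated by
\[
C_f C_g^{m}\,h_f^{m^\s}h_g^{\sum_k m_k|p_k|^\s}\,\frac{\alpha!}{\prod_k m_k!\,(p_k!)^{m_k}}\;M_m^{\t,\s}\prod_{k=1}^{s}\big(M_{|p_k|}^{\t,\s}\big)^{m_k},
\]
with $m=m_1+\dots+m_s$ the total multiplicity and $\sum_k m_k|p_k|=n$.

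Two structural facts reduce this to $M_n^{\t,\s}$. First, the product of defining sequences obeys the sharp inequality
\[
M_m^{\t,\s}\prod_{k=1}^{s}\big(M_{|p_k|}^{\t,\s}\big)^{m_k}\le M_n^{\t,\s},
\]
a constant-free counterpart of the multiplicative content of Lemma \ref{modifiedFaa}, which I would prove by taking logarithms: since $M_p^{\t,\s}=p^{\t p^\s}$ this reads $m^\s\ln m+\sum_k m_k|p_k|^\s\ln|p_k|\le n^\s\ln n$, and the function $\phi(x)=x^\s\ln x$ is convex on $[1,\infty)$ with $\phi(1)=0$; convexity bounds $\sum_k m_k\phi(|p_k|)$ by its value at the extreme configuration (one part of size $n-m+1$, the rest equal to $1$), while $x\mapsto\phi(x+1)$ is convex and vanishes at $0$, hence superadditive, giving $\phi(m)+\phi(n-m+1)\le\phi(n)$. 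Second, $m^\s\le n^\s$ and $\sum_k m_k|p_k|^\s\le n^\s$ (the latter from $|p_k|^\s\le|p_k|n^{\s-1}$ and $\sum_k m_k|p_k|=n$) control the powers of $h_f,h_g$. Because $\s>1$ forces $n\ln n=o(n^\s)$, the remaining combinatorial debris---$\alpha!\le n!\le n^{n}$, $\prod_k(|p_k|!/p_k!)^{m_k}\le d^{\,n}$, $C_g^{m}$, and the bound $\card\pi\le(1+n)^{d+2}$ on the number of summands---is all of size $e^{o(n^\s)}$.

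In the Roumieu case this already closes the argument: summing the at most $(1+n)^{d+2}$ summands gives $|\partial^\alpha(f\circ g)|\le \tilde C\,H^{n^\s}M_n^{\t,\s}$ on $K$ for $H$ chosen large enough to swallow $C_g,h_f,h_g,d$ and the $e^{o(n^\s)}$ factors, which is exactly $f\circ g\in\E_{\{\t,\s\}}(V)$.

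\textbf{The main obstacle is the Beurling case} $(\t,\s)$, where the bound must hold for \emph{every} target $h>0$ and the $e^{o(n^\s)}$ constants can no longer be swallowed by enlarging $H$; here using Lemma \ref{modifiedFaa} as a black box fails, because its factor $C^{n^\s}$ is genuinely of size $\exp(\Theta(n^\s))$. The required smallness has two different sources which must be shown to cooperate. Set $W:=m^\s+\sum_k m_k|p_k|^\s$ and choose $h_f=h_g=:\delta\le h^2$. For ``near--endpoint'' decompositions, where $W\ge\tfrac12 n^\s$, the factor $h_f^{m^\s}h_g^{\sum_k m_k|p_k|^\s}=\delta^{W}\le\delta^{n^\s/2}$ supplies decay of order $n^\s$, while the displayed sequence inequality gives a ratio $\le1$. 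For ``balanced'' decompositions, where $W<\tfrac12 n^\s$, the $\delta$--powers are too weak, but then the sequence inequality is far from sharp: using $\ln m,\ln|p_k|\le\ln n$ one gets
\[
M_m^{\t,\s}\prod_{k}\big(M_{|p_k|}^{\t,\s}\big)^{m_k}\le n^{-\t(n^\s-W)}M_n^{\t,\s}\le n^{-\t n^\s/2}M_n^{\t,\s},
\]
a genuine gain of order $n^\s\ln n$ relative to $M_n^{\t,\s}$. Thus in either regime the product of $\delta^{W}$ with $M_m^{\t,\s}\prod_k(M_{|p_k|}^{\t,\s})^{m_k}/M_n^{\t,\s}$ beats $h^{n^\s}$ for all large $n$ (finitely many $n$ being absorbed into $\tilde C$), giving $f\circ g\in\E_{(\t,\s)}(V)$. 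Carrying out this dichotomy uniformly over $\pi$---in particular checking that the $e^{o(n^\s)}$ combinatorial factors are dominated by the margin in each regime---is the delicate step of the proof.
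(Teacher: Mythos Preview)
Your argument is correct and in fact a bit sharper than the paper's. The paper applies Lemma~\ref{modifiedFaa} to get
\[
\frac{M_m^{\t,\s}}{m!}\prod_k\Big(\frac{M_{|p_k|}^{\t,\s}}{|p_k|!}\Big)^{m_k}\le C^{|\alpha|^{\s}}\frac{M_{|\alpha|}^{\t,\s}}{|\alpha|!},
\]
bounds the $h$-powers via $m^{\s}+\sum_k m_k|p_k|^{\s}\le 2|\alpha|^{\s}$, and then evaluates the residual multinomial sum $\sum \frac{m!}{m_1!\cdots m_s!}=2^{|\alpha|-1}$ exactly; the Beurling case is declared analogous and left to the reader. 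You instead bypass Lemma~\ref{modifiedFaa} and prove the constant-free inequality $M_m^{\t,\s}\prod_k(M_{|p_k|}^{\t,\s})^{m_k}\le M_{|\alpha|}^{\t,\s}$ directly from convexity and superadditivity of $x\mapsto x^{\s}\ln x$, then absorb all factorials, $C_g^{m}$, and $\card\pi$ into an $e^{o(n^{\s})}$ term. This is a genuinely different route: the paper's estimate carries an unavoidable $C^{|\alpha|^{\s}}$ from Lemma~\ref{modifiedFaa}, which is harmless in the Roumieu case but, as you correctly observe, obstructs the Beurling case. Your dichotomy on $W=m^{\s}+\sum_k m_k|p_k|^{\s}$ resolves this, and the sharp form of your sequence inequality is precisely what makes the ``large $W$'' regime go through without a leftover constant. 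Two minor points: in the large-$W$ branch you should take $\delta$ strictly smaller than $h^{2}$ (say $\delta=h^{3}$) to leave room for the $e^{o(n^{\s})}$ factors; and your list of ``combinatorial debris'' contains the redundant item $\prod_k(|p_k|!/p_k!)^{m_k}$, which never appears in your grouping since you bound $M_m^{\t,\s}\prod_k(M_{|p_k|}^{\t,\s})^{m_k}$ rather than the factorial-normalised quantities---but this is harmless.
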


\begin{proof} For simplicity we show that
if  $f\in \E_{\{\t,\s\}}(U)$ and
$g\in \E_{\{\t,\s \}}(V)$ is such that $g: V\to U$, then $f\circ g\in \E_{\{\t,\s\}}(V)$,
and leave the (so-called Beurling) case $f\in \E_{(\t,\s)}(U)$ and
$g\in \E_{(\t,\s )}(V)$ to the reader.

Let $K\subset\subset V$ and $h>0$ be fixed so that $g\in  \E_{\t,\s,h}(K)$.
Put $I=\{g(x), x\in K\}$ and note that $I$ is a compact set, $I \subset \subset U$. Therefore
$f\in  \E_{\t,\s,h'}(I)$ for some  $h'>0$.
By the Fa\'a di Bruno formula \eqref{GeneralizedFaaDiBruno}, for any $x\in K$ we have the following estimate
\begin{multline}
\label{FaaDibrunoKomp}
|\partial^{\alpha}(f\circ g)(x)|\leq|\alpha|! \sum_{(s,p,m)\in \pi} |f^{(m)}(g(x))|\prod_{k=1}^{s}\frac{1}{m_k!}\Big(\frac{1}{p_k !}|\partial^{p_k} g(x)| \Big)^{m_k}\\
\leq A^{|\alpha|+1} |\alpha|! \sum_{(s,p,m)\in \pi} \Big(h^{' m^{\s}}\prod_{k=1}^{s}h^{m_k|p_k|^{\s}}\Big)\frac{m!}{m_1!\dots m_s!}\frac{m^{\t m^{\s}}}{m!}\prod_{k=1}^{s}\Big(\frac{|p_k|^{\t |p_k|^{\s}}}{|p_k| !}\Big)^{m_k}
\end{multline}
for some $A>0$,
and the second sum being taken over all decompositions $\dss |\alpha|=\sum_{k=1}^s m_k |p_k|$ where $\dss m=\sum_{k=1}^s m_k$, $m_k\in\{0,1,\dots, |\alpha|\}$, $|p_k|\in\{1,\dots,|\alpha|\}$, $k =1,\dots,s$ and $s\leq |\alpha|$.

By Lemma \ref{modifiedFaa} we have
\be
\label{NejednakostAIseq3}
\frac{ m^{\t m^{\s}}}{m!}\prod_{k=1}^{s}\Big(\frac{|p_k|^{\t |p_k|^{\s}}}{|p_k| !}\Big)^{m_k} \leq C^{|\alpha|^{\s}} \frac{|\alpha|^{\t |\alpha|^{\s}}}{|\alpha|!}.
\ee
Moreover,
$$
m^{\s}+\sum_{k=1}^{s}m_k|p_k|^{\s}\leq |\alpha|^{\s}+ |\alpha|^{\s-1}\sum_{k=1}^{s}m_k |p_k|=2 |\alpha|^{\s}
$$
wherefrom
\be
\label{NejednakostAIseq4}
h^{' m^{\s}}\prod_{k=1}^{s}h^{m_k|p_k|^{\s}}\leq C_1^{ m^{\s}+\sum_{k=1}^{s}m_k|p_k|^{\s}}\leq C_1^{2 |\alpha|^{\s}},
\ee
where $C_1=\max\{h, h'\}$.
From \eqref{NejednakostAIseq3}, \eqref{NejednakostAIseq4} and \eqref{FaaDibrunoKomp} we conclude
that there is a constant  $C_2>0$ such that
\be
\label{NejednakostAIseq5}
|\partial^{\alpha}(f\circ g)(x)|\leq C_2^{ |\alpha|^{\s}+1}|\alpha|^{\t |\alpha|^{\s}} \sum_{(s,p,m)\in \pi} \frac{m!}{m_1!\dots m_s!}, \quad x\in K.
\ee

It remains to estimate $\dss\sum \frac{m!}{m_1!\dots m_s!}$. Note that without loss of generality we may assume that $s=|\alpha|$ (for $s<|\alpha|$ we may put $m_k=0$, for $s<k\leq |\alpha|$). Since $|p_k|\in\{1,\dots,|\alpha|\}$ note that we can write

$$|\alpha|=\sum_{k=1}^{|\alpha|} m_k |p_k|=\sum_{k=1}^{|\alpha|}k m'_k,$$ where $m=\sum_{k=1}^{|\alpha|}m'_k$. Hence we conclude that the summation in \eqref{NejednakostAIseq5} can be taken over all  $(m_1,\dots,m_s)\in \N^s$, $s=|\alpha|$, such that $\dss |\alpha|=\sum_{k=1}^{|\alpha|}k m_k$ and $\dss m=\sum_{k=1}^{|\alpha|}m_k$. Therefore,
$$\sum \frac{m!}{m_1!\dots m_s!}=2^{1m_1+2m_2+\dots+|\alpha|m_{|\alpha|} -1}=2^{|\alpha|-1},$$ and the proof is completed.
\end{proof}

As an immediate consequence of Theorem \ref{TeoremaKompozicija} we conclude the following:

\begin{cor}
\label{teoremaInverseClosedness}
Let $U\subseteq \Rd$ be open. Classes $\E_{\t,\s}(U)$, $\t>0$, $\s>1$, are inverse-closed in $C^{\infty}(U)$.
\end{cor}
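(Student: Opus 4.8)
The plan is to realize the reciprocal as a superposition and then invoke the superposition theorem. Let $\varphi\in\E_{\t,\s}(U)$ satisfy $\varphi(x)\neq 0$ for all $x\in U$; by the definition of inverse-closedness recalled above, we must show that $\varphi^{-1}=1/\varphi$ again lies in $\E_{\t,\s}(U)$. The key observation is that $1/\varphi=f\circ\varphi$ with $f(z)=1/z$, so the whole statement reduces to checking that $f$ belongs to the same extended Gevrey class as $\varphi$ on a suitable neighborhood of the range of $\varphi$, after which Theorem \ref{TeoremaKompozicija} applies directly.

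Since $\E_{\t,\s}(U)$ is a projective limit over compact sets $K\subset\subset U$, it suffices to exhibit, for each such $K$, a value $h>0$ (in the Roumieu case; for every $h>0$ in the Beurling case) for which $1/\varphi\in\E_{\t,\s,h}(K)$. Fix $K$ and choose an open $V$ with $K\subset\subset V\subset\subset U$. Because $\varphi$ is continuous and nonvanishing, $\varphi(\overline V)$ is a compact subset of $\mathbf R$ avoiding the origin, so there is an open set $W\subset\mathbf R$ with $\varphi(\overline V)\subset W$ and $0\notin\overline W$; on $W$ the reciprocal $f(z)=1/z$ is real-analytic. Thus the restriction $g=\varphi|_V\colon V\to W$ lies in $\E_{\t,\s}(V)$ and maps $V$ into the open set $W$ on which $f$ is defined, which is exactly the hypothesis configuration of Theorem \ref{TeoremaKompozicija}.

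It remains to verify that $f\in\E_{\t,\s}(W)$, and here is the only point that is not purely formal: $f$ is merely known to be real-analytic, and we must upgrade this to membership in the extended Gevrey scale. This follows from the embeddings recorded in Proposition \ref{detectposition}, in particular from $\varinjlim_{t\to\infty}\E_{\{t\}}(W)\hookrightarrow\E_{\t,\s}(W)$: every real-analytic function on $W$ belongs to $\E_{\{1\}}(W)\subset\bigcup_{t}\E_{\{t\}}(W)\subset\E_{\t,\s}(W)$ for every $\t>0$ and $\s>1$, in both the Roumieu and the Beurling category. With $f\in\E_{\t,\s}(W)$ and $g=\varphi|_V\in\E_{\t,\s}(V)$, Theorem \ref{TeoremaKompozicija} gives $f\circ\varphi=1/\varphi\in\E_{\t,\s}(V)$, and restriction to $K\subset\subset V$ furnishes the required bound in $\E_{\t,\s,h}(K)$. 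As $K\subset\subset U$ was arbitrary, $1/\varphi\in\E_{\t,\s}(U)$, which is inverse-closedness. I expect no genuine obstacle: the superposition theorem does all the work, and the only step demanding attention is the (mild) verification that the reciprocal lands in $\E_{\t,\s}$ rather than merely in $C^\infty$, which is precisely where analyticity of $1/z$ together with the embedding of the Gevrey scale into the extended Gevrey scale is used; the remainder is the routine localization dictated by the projective-limit definition.
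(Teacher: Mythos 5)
Your proposal is correct and takes essentially the same route as the paper: the paper deduces the corollary directly from the superposition theorem (Theorem \ref{TeoremaKompozicija}), stating it as an immediate consequence. Your write-up simply makes explicit the details the paper leaves implicit, namely writing $1/\varphi=f\circ\varphi$ with $f(z)=1/z$ analytic away from the origin, using the embedding of the (analytic/Gevrey) classes into $\E_{\t,\s}$ to place $f$ in the correct class, and then localizing over compact sets.
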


Note that the proof of Theorem \eqref{TeoremaKompozicija} holds even if $\s=1$ and $\t\geq 1$, so that
we recover the well known results on stability under superposition of Gevrey (analytic) classes of functions (see \cite{Mostefa, RS, CarmenAntonio, Klotz}).

\section{Wave front sets related to classes $\E_{\t,\s}$}\label{ChapterWaveFront}

Let $\t>0$, $\s>1$, $\Omega\subseteq K\subset \subset U\subseteq\Rd$, where $\Omega$ and $U$ are open in $\Rd$, $K$ is compact in $\Rd$,
in and the closure of $\Omega$ is contained in $K$, $ \overline{\Omega} \subseteq K$.

Let $u\in \D'(U)$. In \cite{PTT-01} we investigated the nature of regularity related to the condition
 \begin{equation}
\label{uslov3'}
|\widehat u_N(\xi)|\leq A\, \frac{h^{N} N!^{\t/\s}}{|\xi|^{\lfloor N^{1/\s} \rfloor}}, \quad N\in { \N},\,\xi\in \Rd\backslash\{0\}.
\end{equation} where $\{u_N\}_{N\in \N}$ is bounded sequence in $\E'(U)$ such that $u_N=u$ in $\Omega$ and $A,h$ are some positive constants.

Note that the conditions  (\ref{uslov3'}) can be replaced by an equivalent set of conditions if instead of  $N$ we use another
positive, increasing sequence $a_N$ such that $a_N\to \infty$, $N\to \infty$ (cf. \cite{PTT-02}).
This change of variables called {\em enumeration}, ``speeds up`` or ``slows down`` the decay estimates of
single members of the corresponding sequences,
without changing the asymptotic behavior of the whole sequence when $N \rightarrow \infty$.
After applying the enumeration $N\to a_N$ we can write  again $u_N$ instead of $u_{a_N}$, since we are only interested in the  asymptotic behavior.

For example, Stirling's formula  and enumeration $N\to N^{\s}$ applied to (\ref{uslov3'}) give an equivalent  estimate of the form
\be
\label{uslov3'''}
|\widehat u_N(\xi)|\leq A_1\, \frac{h_1^{N^{\s}} N^{\t N^{\s}}}{|\xi|^{N}},
\quad N\in { \N},\,\xi\in \Rd\backslash\{0\},
\ee
for some constants $A_1, h_1 > 0$.

Wave-front sets  ${\WF}_{\{\t,\s\}}(u)$ (see Remark \ref{BjorlingRem} for ${\WF}_{(\t,\s)}(u)$) are introduced in \cite{PTT-02}
in the study of local regularity in ${\E}_{\{\tau, \s\}}(U)$ .
Together with enumeration we used sequences of cutoff functions in a similar way as it is done in
\cite{HermanderKnjiga} in the context of  analytic wave front set ${\WF}_A$.
We recall the definition of ${\WF}_{\{\t,\s\}}(u)$.

\begin{de}
\label{Wf_t_s1}
Let  there be given $u\in \D'(U)$, $\t>0$, $\s>1$, and $(x_0,\xi_0)\in U\times\Rd\backslash\{0\}$. Then $(x_0,\xi_0)\not \in {\WF}_{\{\t,\s\}}(u)$ if there exists an open neighborhood $\Omega $ of $x_0$, a conic neighborhood $\Gamma$ of $\xi_0$ and a bounded sequence $\{u_N\}_{N\in \N}$ in $\E'(U)$ such that $u_N=u$ on $\Omega$ and \eqref{uslov3'} holds for all $\xi \in \Gamma$ and for some constants $A,h>0$.
\end{de}

For a given $u\in \D'(U)$ it immediately follows  that ${\WF}_{\{\t,\s\}}(u)$
is closed subset of $U\times\Rd\backslash\{0\}$. Note that for $\t>0$ and $\s>1$
$$
{\WF}_{\{\t,\s\}}(u)\subseteq{\WF}_{\{1,1\}}(u)={\WF}_A (u),\;\;\;
u\in \D'(U),
$$
where ${\WF}_A (u)$ denoted the analytic wave front set of a distribution $u\in \D'(U)$, cf. \cite{HermanderKnjiga}.

Next we prove that in the definition of
${\WF}_{\{\t,\s\}}(u)$ a  bounded sequence  of cut-off functions $\{u_N\}_{N\in \N} \subset \E'(U)$  can be replaced by a single function from $\D_{\{\t,\s\}}(U)$. First we give an example of $\phi\in \D_{\{\t,\s\}}(U)$ such that
$\phi = 1$ on particular open sets.

\begin{ex}
\label{FiCutOff} Let there be given $x_0\in \Rd$, $\t >0,$ $\s >1$, and let $\dss d=\sum_{p=1}^{\infty}\frac{1}{(2(p+1))^{\t {p^{{\s}-1}}}}$.
By Lemma \ref{teoremaKompaktannosac} and  \cite[Theorem 1.4.2]{HermanderKnjiga}, there exists  $\psi\in \D^{\overline{B_{d/2}(x_0)}}_{\{\t,\s\}}$ such that $\dss \int\psi (x)\, dx=1$. If $\chi$ denotes the characteristic function of
$$\dss \{y\in \Rd\,|\, |x-y|\leq d/2,\, x\in\overline{B_{d/2}(x_0)}\},$$ then
$\phi=\chi*\psi =1$ on an open neighborhood $\Omega$ of $\overline{B_{d/2}(x_0)}$. In particular, if $U$ is  an open set such that
$$\inf\{|x-y|\,:\,x\in U^{c},\, y\in \overline{B_{d/2}(x_0)} \}>d$$
then $\phi\in \D_{\{\t,\s\}}(U)$.
\end{ex}

\begin{rem}
\label{PaleyRemark}
In the sequel we will use the following  Paley-Wiener type  estimates.
If  $u\in\E'(U)$, then
$|\widehat u(\xi)|\leq C\langle\xi\rangle^M,\,\xi\in \Rd,$
for some constant $C>0$, where $M$ denotes the order of distribution $u$.

Similarly, if $\phi\in \D^K_{\{\t,\s\}}$, where $K$ is a compact set in $ \Rd$,
then
\be
\label{chiNPaley}
|\widehat \phi (\xi)|\leq A h^{|\alpha|^\s} {|\alpha|^{\t |\alpha|^{\s}}}\langle\xi\rangle^{-|\alpha|},\quad
\alpha\in {\N^d}, \xi\in \Rd,
\ee
for some constants $A,h>0$.
\end{rem}
\begin{te}
\label{TeoremaKarakterizacija}
Let  $u\in \D'(U)$,  $\t>0$, $\s>1$, and let $(x_0,\xi_0)\in U\times\Rd\backslash\{0\}$.
Then $(x_0,\xi_0)\not \in \WF_{\{\t,\s\}}(u)$ if and only if there exists a conic
neighborhood $\Gamma_0$ of $\xi_0$, a compact set
$ K\subset\subset U $ and
$\phi\in \D_{\{\t,\s\}}^K$ such that $\phi=1$ on a neighborhood of $x_0$, and such that
\be
\label{SingsuplemaUslov1}
|\widehat{\phi u}(\xi)|\leq A \frac{h^{N^{\s}} N^{\t N^{\s}}}{|\xi|^N},\quad N\in {\N}\,,\xi\in \Gamma_0\,,
\ee
for some $A,h>0$.
\end{te}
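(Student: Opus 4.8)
The statement is an equivalence, so the plan is to prove both directions. The ``if'' direction (existence of a single $\phi$ with \eqref{SingsuplemaUslov1} implies $(x_0,\xi_0)\notin\WF_{\{\t,\s\}}(u)$) should be the easy one: given such a $\phi\in\D_{\{\t,\s\}}^K$, I would set $u_N:=\phi u$ for all $N$. This is trivially a bounded sequence in $\E'(U)$ (it is a constant sequence), it equals $u$ on the neighborhood of $x_0$ where $\phi=1$, and the estimate \eqref{SingsuplemaUslov1} is exactly the enumerated form \eqref{uslov3'''} of the defining condition \eqref{uslov3'}. Invoking the equivalence of \eqref{uslov3'} and \eqref{uslov3'''} via Stirling's formula and the enumeration $N\to N^\s$ (already recorded in the excerpt), Definition \ref{Wf_t_s1} is satisfied on the conic neighborhood $\Gamma_0$, so $(x_0,\xi_0)\notin\WF_{\{\t,\s\}}(u)$.

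The substance is in the ``only if'' direction: from a bounded \emph{sequence} $\{u_N\}$ satisfying \eqref{uslov3'} on a cone $\Gamma$, manufacture a \emph{single} cut-off $\phi$ satisfying \eqref{SingsuplemaUslov1} on a possibly smaller cone $\Gamma_0$. First I would shrink data: pick $\Omega$ and the cone $\Gamma$ from Definition \ref{Wf_t_s1}, choose $x_0$'s neighborhood and the ball $\overline{B_{d/2}(x_0)}\subset\Omega$ so that the construction of Example \ref{FiCutOff} produces $\phi\in\D_{\{\t,\s\}}^K$ with $\phi=1$ on a neighborhood of $x_0$ and $\supp\phi\subset\Omega$. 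Since $u_N=u$ on $\Omega$, we have $\phi u=\phi u_N$ for every $N$, so $\widehat{\phi u}=\widehat{\phi u_N}=\widehat\phi * \widehat{u_N}$. The plan is then to estimate this convolution on a narrower cone $\Gamma_0\subset\subset\Gamma$ by the standard cone-splitting argument from H\"ormander's analytic wave front theory: write
$$
\widehat{\phi u}(\xi)=\int_{\eta\in\Gamma}\widehat\phi(\xi-\eta)\widehat{u_N}(\eta)\,d\eta
+\int_{\eta\notin\Gamma}\widehat\phi(\xi-\eta)\widehat{u_N}(\eta)\,d\eta .
$$
On the first region I use \eqref{uslov3'} (in enumerated form) to bound $\widehat{u_N}$, while the rapid decay of $\widehat\phi$ from \eqref{chiNPaley} controls the integral; on the complementary region, for $\xi\in\Gamma_0$ the difference $\xi-\eta$ stays in a cone bounded away from the origin direction, so $|\xi-\eta|\gtrsim|\xi|+|\eta|$, and the Paley--Wiener estimate \eqref{chiNPaley} for $\widehat\phi$ together with the polynomial bound $|\widehat{u_N}(\eta)|\lesssim\langle\eta\rangle^M$ (Remark \ref{PaleyRemark}) yields fast decay in $|\xi|$.

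The \textbf{main obstacle} is matching the order of the decay to the order $N$ appearing in \eqref{SingsuplemaUslov1}, while respecting the extended-Gevrey growth $N^{\t N^\s}$ rather than the factorial growth of the analytic case. Concretely: for each fixed $\xi\in\Gamma_0$ I must choose the index $N$ and the differentiation order in \eqref{chiNPaley} as functions of $|\xi|$ so that the products of the type $h^{N^\s}N^{\t N^\s}|\xi|^{-N}$ are optimized; this is where the enumeration $N\to N^\s$ and the two defining sequence estimates $M_p^{\t,\s}\le A C^{p^\s}\lf p^\s\rf!^{\t/\s}$ (Lemma \ref{osobineM_p_s}) must be used to convert the available bound into the required form \eqref{SingsuplemaUslov1} without losing the correct exponent. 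I expect the bookkeeping to mirror the analytic wave front argument of \cite{HermanderKnjiga}, but with every factorial replaced by its $\E_{\t,\s}$ analogue and with the crucial role played by the fact that the estimate must hold uniformly in $N$ for the constant sequence $\phi u$, which is precisely what lets a single $\phi$ replace the sequence.
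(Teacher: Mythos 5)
Your overall strategy coincides with the paper's: the easy direction is handled exactly as in the paper by the constant sequence $u_N=\phi u$ together with the enumeration equivalence of \eqref{uslov3'} and \eqref{uslov3'''}, and the hard direction is a H\"ormander-type splitting of the convolution $\widehat{\phi u}=\widehat{\phi}*\widehat{u_N}$ in which, for each $N$, one uses the $N$-th member of the bounded sequence, the wave-front decay of $\widehat{u_N}$ on one region, and the Paley--Wiener bound \eqref{chiNPaley} with differentiation order tied to $N$ (plus the polynomial bound of Remark \ref{PaleyRemark}) on the other.

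There is, however, one concrete point where your plan as written would not close: the choice of regions. You split according to whether the argument $\eta$ of $\widehat{u_N}$ lies in $\Gamma$ or not, and on the region $\eta\in\Gamma$ you propose to bound $\widehat{u_N}(\eta)$ by $A\,h^{N^{\s}}N^{\t N^{\s}}|\eta|^{-N}$. That bound decays in $|\eta|$, not in $|\xi|$; it is non-integrable near $\eta=0$ once $N\geq d$, and for $\eta\in\Gamma$ with $|\eta|\ll|\xi|$ it cannot produce the factor $|\xi|^{-N}$ demanded by \eqref{SingsuplemaUslov1}. The paper avoids this by splitting \emph{radially}: with $\eta$ the argument of $\widehat{\phi}$, it takes $|\eta|<\varepsilon|\xi|$ versus $|\eta|\geq\varepsilon|\xi|$, where $\varepsilon$ is chosen so that $\xi\in\Gamma_0$ and $|\eta|<\varepsilon|\xi|$ force $\xi-\eta\in\Gamma$ and $|\xi-\eta|\geq(1-\varepsilon)|\xi|$. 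On that region the wave-front estimate for $\widehat{u_N}(\xi-\eta)$ alone already yields the decay $|\xi|^{-N}$ (the loss $(1-\varepsilon)^{-N}\leq(1-\varepsilon)^{-N^{\s}}$ being absorbed into $h^{N^{\s}}$), while $\widehat{\phi}$ only needs the fixed decay $\langle\eta\rangle^{-d-1}$; on the complementary region $|\eta|\geq\varepsilon|\xi|$ one uses \eqref{chiNPaley} with $|\alpha|=N+M+d+1$ together with $|\widehat{u_N}(\xi-\eta)|\leq C\langle\xi-\eta\rangle^{M}$, and then $\overline{(M.2)'}$ converts $(N+M+d+1)^{\t(N+M+d+1)^{\s}}$ into $C^{N^{\s}}N^{\t N^{\s}}$. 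Your angular dichotomy can be repaired by subdividing the region $\eta\in\Gamma$ once more at $|\eta|=\frac{1}{2}|\xi|$, but after that repair it is the paper's argument. You should also state explicitly that the order $M$ in Remark \ref{PaleyRemark} is uniform in $N$ because $\{u_N\}$ is bounded in $\E'(U)$; this uniformity is precisely what legitimizes the $N$-dependent choice $|\alpha|=N+M+d+1$.
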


\begin{proof}
The necessity is trivial, since if there is $\phi\in \D_{\{\t,\s\}}^K$, $ K\subset\subset U $, $\phi=1$ on a neighborhood $\Omega$ of $x_0$
and such that \eqref{SingsuplemaUslov1} holds
in a conic neighborhood $\Gamma_0$ of $\xi_0$, then by putting
$u_N=\phi u$, for every $N\in \N$ it follows that $(x_0,\xi_0)\not \in \WF_{\{\t,\s\}}(u)$.

\par

Now assume that $(x_0,\xi_0)\not \in \WF_{\{\t,\s\}}(u)$, i.e. that there exists an open neighborhood $\Omega$ of $x_0$, a conic neighborhood $\Gamma$ of $\xi_0$ and a bounded
sequence $\{u_N\}_{N\in \N}$ in $\E'(U)$ such that $u_N=u$ on $\Omega$ and such that
\begin{equation}
\label{uslovEnumeracija}
|\widehat{u_N} (\xi)|\leq A \frac{h^{N^{\s}} N^{\t N^{\s}}}{|\xi|^N},
\quad N\in { \N},\,\xi\in \Gamma.
\end{equation}

Choose $\phi\in \D_{\{\t,\s\}}^{K_{x_0}}$, $K_{x_0}\subset\subset \Omega$, $\phi=1$ on some neighborhood of $x_0$, and choose a conic neighborhood $\Gamma_0$ of $\xi_0$ with the closure contained in $\Gamma$.
Let $\varepsilon >0$ be chosen so that $\xi-\eta \in \Gamma$ when $\xi\in \Gamma_0$ and $|\eta|<\varepsilon |\xi|$.

Since $\phi u=\phi u_N$,
$$
\widehat {\phi u}(\xi)=\Big(\int_{|\eta|<\varepsilon |\xi|} +\int_{|\eta|\geq\varepsilon |\xi|}\Big)
\widehat \phi (\eta)\widehat u_N(\xi-\eta)\,d\eta =I_1+I_2\,,\quad \xi\in \Gamma_0.
$$

To estimate $I_1$ we use that $|\eta|<\varepsilon |\xi|$ implies $|\xi-\eta|\geq |\xi|-|\eta|> (1-\varepsilon)|\xi|.$
By (\ref{uslovEnumeracija}) and $|\widehat \phi (\eta)|\leq B\langle\eta\rangle^{-d-1}$ for some $B>0$,
we have
\begin{multline}
|I_1|=\Big| \int_{|\eta|<\varepsilon |\xi|} \widehat \phi(\eta)\widehat{u_N}(\xi-\eta)\,d\eta \Big| \\[1ex]
\leq \int_{|\eta|<\varepsilon |\xi|} |{\widehat \phi} (\eta)| A \frac{h^{N^{\s}} N^{\t N^{\s}}}{|\xi-\eta|^{N}} d\eta
\\[1ex]
\leq A B \frac{h^{N^{\s}} N^{\t N^{\s}}}{{((1-\varepsilon)|\xi|)}^{N}} \int_{{\bf R}^d}\langle \eta \rangle^{-d-1} d\eta
\\[1ex]
\leq A_1 \frac{h_1^{N^{\s}} N^{\t N^{\s}}}{|\xi|^{N}} ,\quad \xi\in \Gamma_0, N\in \N,
\end{multline}
for some constants $A_1, h_1>0$. For the last estimate we have used $\dss (1-\varepsilon)^{-N}<(1-\varepsilon)^{-N^{\s}}$ when $\s>1$.

To estimate $I_2$ we use that  $|\eta|\geq \varepsilon |\xi|$ implies $ |\xi-\eta|\leq |\xi|+|\eta|\leq (1+1/\varepsilon)|\eta|. $
For a given $N\in \N$, we put $|\alpha|={N+M+d+1}$,
where $M>0$ is the order of distribution $u$. Then by \eqref{chiNPaley} there exist constants
$A,h>0$ such that
\begin{multline}
|I_2|=\Big| \int_{|\eta|\geq \varepsilon |\xi|} \widehat \phi(\eta)\widehat{u_N}(\xi-\eta)\,d\eta \Big|\\
\leq\frac{A h^{{(N+M+d+1)}^\s} {(N+M+d+1)^{\t {(N+M+d+1)}^{\s}}}}{(\varepsilon |\xi|)^{N}}\\
\int_{|\eta|\geq \varepsilon |\xi|} \langle\eta\rangle^{-M-d-1} C \langle\xi-\eta\rangle^{M} \,d\eta\\
\leq \frac{A_1 h_1^{N^\s} {N^{\t N^{\s}}}}{|\xi|^{N}} \quad \xi\in \Gamma_0, N\in \N,
\end{multline}
where $h_1=\max\{h, h^{2^{\s-1}}\},$ $ A_1 $ $ =A\, \max\{1,h^{2^{\s-1}(M+d+1)}\}.$

In the last inequality we used
$$
\dss |\alpha|^{\s}+|\beta|^{\s}\leq |\alpha+\beta|^{\s}\leq 2^{\s-1}(|\alpha|^{\s}+|\beta|^{\s}),\quad \alpha,\beta \in \N^d,
$$
and $\overline{(M.2)'}$ property of  $M_p^{\t,\s}=p^{\t p^{\s}}$.

Thus, \eqref{SingsuplemaUslov1} follows and the theorem is proved.
\end{proof}

\begin{rem}
\label{BjorlingRem}
In the Beurling case, for  $u\in \D'(U)$, $\t>0$, $\s>1$, and  $(x_0,\xi_0)\in U\times\Rd\backslash\{0\}$ we have that
$(x_0,\xi_0)\not \in {\WF}_{(\t,\s)}(u)$ if there exists open neighborhood $\Omega $ of $x_0$, a
conic neighborhood $\Gamma$ of $\xi_0$ and a bounded sequence $\{u_N\}_{N\in \N}$ in $\E'(U)$ such that $u_N=u$ on $\Omega$ and such that
for every $h>0$ there exists $A>0$ such that
\begin{equation*}
|\widehat u_N(\xi)|\leq A\, \frac{h^{N} N!^{\t/\s}}{|\xi|^{\lfloor N^{1/\s} \rfloor}}, \quad N\in { \N},\,\xi\in \Gamma.
\end{equation*}

Note that Theorem \ref{TeoremaKarakterizacija} can be formulated for the Beurling case as well with $\phi\in \D_{(\t,\s)}^K$ such that
\eqref{SingsuplemaUslov1} holds for  every $h>0$ and for some $A = A(h)>0$.
More precisely, for any $h>0$ we can choose $\phi\in \D_{\t,\s,C_h}^K$ where $C_h=\min\{h, h^{\frac{1}{2^{\s-1}}}\}$ and obtain
$\phi\in \D_{(\t,\s)}^K$ with the desired properties.

Thus the results concerning ${\WF}_{(\t,\s)}(u)$ are analogous to those for ${\WF}_{\{\t,\s\}}(u)$,
and we will consider only the later wave-front sets in the sequel.
\end{rem}

We end this section an auxiliary result which will be used in the proof of  Theorem \ref{FundamentalTheoremNK}.

\par

\begin{lema}
\label{LemaGlavnaTeorema}
Let $u\in\D'(U)$, $\t>0$, $\s>1$, $\Omega\subset K\subset\subset U$, where $U$ and $\Omega$ are open.
If $F$ is a closed cone such that ${\WF}_{\{\t,\s\}}(u) \cap (K\times F)=\emptyset$
and $\phi\in \D_{\{\t,\s\}}^K$, $\phi=1$ on $\Omega$, then for some $A,h>0$ it holds
\be
\label{LemaUslov}
|\widehat{\phi u}(\xi)|\leq A \frac{h^{N^{\s}} N^{\t N^{\s}}}{|\xi|^N},\quad N\in {\N}\,,\xi\in F\,.
\ee
\end{lema}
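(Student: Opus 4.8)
The plan is to combine the single-cut-off characterization of Theorem \ref{TeoremaKarakterizacija} with a double compactness argument and a partition of unity, and then to reduce the desired bound \eqref{LemaUslov} to the convolution estimate already performed in the proof of Theorem \ref{TeoremaKarakterizacija}. Since $F$ is a closed cone, by homogeneity it suffices to establish the estimate for $\xi\in F$ with $|\xi|$ large, and we may work on the compact set $K\times(F\cap S^{d-1})$. First I would fix $x\in K$: for every $\omega\in F\cap S^{d-1}$ we have $(x,\omega)\notin\WF_{\{\t,\s\}}(u)$, so Theorem \ref{TeoremaKarakterizacija} provides an open conic neighborhood $\Gamma_{x,\omega}$ of $\omega$ and a function $\phi_{x,\omega}\in\D_{\{\t,\s\}}$, supported in a compact subset of $U$ and equal to $1$ on a ball about $x$, for which $\widehat{\phi_{x,\omega}u}$ satisfies \eqref{SingsuplemaUslov1} on $\Gamma_{x,\omega}$. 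Choosing a smaller conic neighborhood $\Gamma'_{x,\omega}\subset\Gamma_{x,\omega}$ and using compactness of $F\cap S^{d-1}$, finitely many of the $\Gamma'_{x,\omega}$ cover $F\cap S^{d-1}$; taking $r_x$ small enough that all the corresponding cut-offs equal $1$ on $B_{r_x}(x)$ and covering $K$ by the balls $B_{r_x/2}(x)$, a further extraction yields finitely many points $x_i$, cut-offs $\phi_{i,l}=1$ on $B_{r_i}(x_i)$, and cones $\Gamma'_{i,l}\subset\Gamma_{i,l}$, $i=1,\dots,I$, $l=1,\dots,L_i$, such that the $\Gamma'_{i,l}$ cover $F\cap S^{d-1}$ and each $\widehat{\phi_{i,l}u}$ obeys \eqref{SingsuplemaUslov1} on $\Gamma_{i,l}$.

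Next I would localize the given $\phi$ in space. Since the balls $B_{r_i}(x_i)$ cover $K\supseteq\supp\phi$, Lemma \ref{teoremaKompaktannosac} together with the algebra property of $\E_{\{\t,\s\}}(U)$ produces a partition of unity $\{\chi_i\}_{i=1}^I\subset\D_{\{\t,\s\}}$ with $\supp\chi_i\subset B_{r_i}(x_i)$ and $\sum_i\chi_i=1$ on a neighborhood of $\supp\phi$, so that $\phi u=\sum_{i=1}^I(\chi_i\phi)u$. For each fixed $i$ and each $l$, the cut-off $\phi_{i,l}$ equals $1$ on $\supp(\chi_i\phi)$, whence $(\chi_i\phi)u=(\chi_i\phi)(\phi_{i,l}u)$ as distributions, and consequently $\widehat{(\chi_i\phi)u}=\widehat{\chi_i\phi}*\widehat{\phi_{i,l}u}$. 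Here $\chi_i\phi\in\D_{\{\t,\s\}}^K$, so $\widehat{\chi_i\phi}$ satisfies the Paley--Wiener bound \eqref{chiNPaley}, while $\phi_{i,l}u\in\E'(U)$ has finite order $M$ and hence at most polynomial growth.

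Finally, to bound $\widehat{\phi u}$ on $F$, I would fix $\xi\in F$, note that $\xi/|\xi|\in\Gamma'_{i,l}$ for some $l$, and estimate $\widehat{\chi_i\phi}*\widehat{\phi_{i,l}u}(\xi)$ on $\Gamma'_{i,l}$. This is exactly the computation already carried out for $I_1$ and $I_2$ in the proof of Theorem \ref{TeoremaKarakterizacija}: splitting the convolution integral into the regions $|\eta|<\varepsilon|\xi|$ and $|\eta|\ge\varepsilon|\xi|$, with $\varepsilon$ chosen so that $\xi-\eta\in\Gamma_{i,l}$ whenever $\xi\in\Gamma'_{i,l}$ and $|\eta|<\varepsilon|\xi|$, one controls the first region by the decay \eqref{SingsuplemaUslov1} of $\widehat{\phi_{i,l}u}$ and the second by the rapid decay \eqref{chiNPaley} of $\widehat{\chi_i\phi}$ against the polynomial growth of $\widehat{\phi_{i,l}u}$. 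Summing the resulting bounds of the form $A\,h^{N^{\s}}N^{\t N^{\s}}/|\xi|^N$ over the finitely many $i$ (and passing to the largest constants) gives \eqref{LemaUslov}. I expect the only genuine work to be in this last estimate, in particular the bookkeeping of the $2^{\s-1}$ losses arising from $\overline{(M.2)'}$ and from $|\alpha|^\s+|\beta|^\s\le|\alpha+\beta|^\s\le 2^{\s-1}(|\alpha|^\s+|\beta|^\s)$ when one sets $|\alpha|\approx N+M+d+1$ in \eqref{chiNPaley}; since this coincides with the argument already given in Theorem \ref{TeoremaKarakterizacija}, it can be quoted almost verbatim, the compactness and partition-of-unity reductions being the new ingredients here.
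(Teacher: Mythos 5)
Your proposal is correct and follows essentially the same route as the paper's own proof: the paper likewise invokes Theorem \ref{TeoremaKarakterizacija} at each point of $K\times F$, transfers the decay to the given cut-off via the convolution splitting $\widehat{\phi u}=\widehat{\phi}\ast\widehat{\psi u}=I_1+I_2$ exactly as in that theorem's proof, then uses compactness of $F$ intersected with the unit sphere and of $K$ together with a $\D_{\{\t,\s\}}$ partition of unity $\{\phi_k\}$ (citing Komatsu rather than Lemma \ref{teoremaKompaktannosac}) to write $\phi=\sum_k \phi\phi_k$ and sum the finitely many estimates. The only differences are organizational (the paper does the cone cover for fixed $x_0$ first and the ball cover of $K$ second, while you perform the double compactness extraction at once), not mathematical.
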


\begin{proof}
Let $(x_0,\xi_0)\in K\times F$, and set $r_0:=r_{{x_0,\xi_0}}>0$.
Furthermore, let $\phi\in \D_{\{\t,\s\}}(B_{r_0}(x_0))$, $\overline{B_{r_0}(x_0)}\subseteq \Omega\subseteq K$.

Since $(x_0,\xi_0)\not \in {\WF}_{\{\t,\s\}}(u)$ by Theorem \ref{TeoremaKarakterizacija}
there exists $\psi\in \D_{\{\t,\s\}}(U)$, $\psi=1$ on $\Omega$, and  a conical neighborhood $\Gamma$ of $\xi_0$, such that
\begin{equation}
|\widehat{\psi u}(\xi)|\leq A \frac{h^{N^{\s}} N^{\t N^{\s}}}{|\xi|^N},
\quad N\in { \N},\,\xi\in \Gamma,
\end{equation}
for some $A,h>0$.

Let $\Gamma_0$ be an open conical neighborhood of $\xi_0$ with the closure contained in $\Gamma$. We write
$$
\widehat{\phi u}(\xi)=\Big(\int_{|\eta|<\varepsilon |\xi|} +\int_{|\eta|\geq\varepsilon |\xi|}\Big)
\widehat \phi (\eta)\widehat{\psi u}(\xi-\eta)\,d\eta =I_1+I_2\,,\quad \xi\in \Gamma_0,\,,
$$ and arguing in a similar way as in the proof of Theorem \ref{TeoremaKarakterizacija} we obtain  \eqref{LemaUslov} for $(x,\xi)\in B_{r_0}(x_0)\times \Gamma_0$.

In order to extend the result to $K\times F$ we use the same idea
as in the proof of \cite[Lemma 8.4.4]{HermanderKnjiga}.
Since the intersection of $F$ with the unit sphere is a compact set,
there exists a finite number $n$ of balls $B_{r_{{x_0,\xi_j}}}(x_0)$,
such that $F \subset \cup_{j=1} ^n \Gamma_j$. Note that \eqref{LemaUslov} remains true if $\phi$ is chosen so that $\dss \supp\phi \subseteq B_{r_{x_0}}:=\bigcap_{j=1}^{n}B_{r_{x_0,\xi_j}}(x_0)$, $\xi_j\in \Gamma_j$.

Moreover, since $K$ is  compact set, it can be covered by a finite number $m$ of balls $B_{r_{x_k}}$, $k\leq m$.
Since $M_p^{\t,\s}=p^{\t p^{\s}}$ satisfies $(M.1)$ and $(M.3)'$, then  there exist
non-negative functions $\phi_k\in \D_{\{\t,\s\}}(B_{r_{x_k}})$, $k\leq n$, such that $\suml_{k=1}^n \phi_k=1$
on a neighborhood of $K$ (cf. \cite[Lemma 5.1.]{Komatsuultra1}).

To conclude the proof we note that if $\phi\in \D^K_{\{\t,\s\}}$ then $\phi\phi_k\in \D_{\{\t,\s\}}(B_{r_{x_k}})$ and consequently \eqref{LemaUslov} holds if we replace $\phi$ by $\phi \phi_k$. Since $\suml_{k=1}^n\phi\phi_k=\phi$, the proof is finished.
\end{proof}

\section{Main result}\label{SectionNonConstant}

We first recall the definition of the characteristic set of an operator and the main property of its principal symbol, cf.
\cite{Rauch}.

If $ P(x,D)=\sum_{|\alpha|\leq m} a_{\alpha } (x) D^{\alpha}$ is a differential operator of order $m$ on $U$ and
$a_\alpha \in C^\infty (U)$, $|\alpha|\leq m $,  then
its characteristic variety at $\overline{x}\in U$  is given by
$${\rm Char}_{\overline{x}}(P)=\{(\overline{x},\xi)\in U\times \Rd\backslash\{0\} \,|\, P_m(\overline{x},\xi)=0 \},$$
and its characteristic set  on $U$ is given by
$${\rm Char}(P)=\bigcup_{\overline{x}\in U}{{\rm Char}_{{\overline{x}}}(P)}.$$
Here $\dss P_m(x,\xi)=\sum_{|\alpha|=m}a_{\alpha}(x)\xi^{\alpha} \in C^{\infty}(U\times\Rd\backslash\{0\})$
is the principal symbol of $ P(x,D).$

By the homogeneity of the principal symbol it follows that ${\rm Char}(P)$ is a closed conical subset of $U\times \Rd\backslash\{0\}$.

If $(x_0,\xi_0)\not \in {\rm Char}(P)$ then there exists an open neighborhood
$\Omega$ of $x_0$ and a conical neighborhood $\Gamma$ of $\xi_0$ such that $P_{m}(x,\xi)\not=0$, $x\in \Omega$ and $\xi\in \Gamma$. Moreover,
since the principal symbol is homogeneous we have
\be
\Big|P_m(x,\frac{\xi}{|\xi|})\Big|=\frac{1}{|\xi|^{m}}|P_m(x,\xi)|\geq C,\quad x\in \Omega, \xi\in \Gamma,\nonumber
\ee
so that for any compact set $K\subset \subset \Omega$ there are constants  $0<C_1 < C_2$ such that
\be
\label{OcenaSaDonjeStrane}
C_1 |\xi|^m \leq |P_m(x,\xi)|\leq C_2|\xi|^{m},\quad x\in K, \xi\in \Gamma.\nonumber
\ee

The main result of this section, Theorem \ref{FundamentalTheoremNK} extends \cite[Theorem 1.1]{PTT-02}
to operators with variable coefficients. We recall that
in \cite[Theorem 8.6.1]{HermanderKnjiga} operators with real analytic coefficients are observed,
while in Theorem \ref{FundamentalTheoremNK} we allow the extended Gevrey regularity of the coefficient.
In particular, by the inspection of the proof, we conclude that Theorem \ref{FundamentalTheoremNK} remains valid even if $\s=1$ and $\t>1$,
that is, if the coefficients are Gevrey regular. In that sense Theorem \ref{FundamentalTheoremNK} extends
\cite[Theorem 8.6.1]{HermanderKnjiga} as well.

\par

\begin{te}
\label{FundamentalTheoremNK}
Let there be given$\t>0,$ $\s>1$, $u\in \D'(U)$ and let $\dss P(x,D)=\sum_{|\alpha|\leq m}a_{\alpha}(x)D^{\alpha}$ be partial differential operator of order $m$ such that $a_{\alpha}(x)\in \E_{\{\t,\s\}}(U)$, $|\alpha|\leq m $. Then
\be
\label{FundamentalEstimateNK}
\WF_{\{2^{\s-1}\t,\s\}}(f)\subseteq \WF_{\{2^{\s-1}\t,\s\}}(u)\subseteq \WF_{\{\t,\s\}}(f) \cup {\rm Char}(P(x,D)),
\ee
where $P(x,D)u=f$ in $\D'(U)$.
In particular,
\be
\WF_{0,\infty}(f)\subseteq \WF_{0,\infty}(u)\subseteq \WF_{0,\infty}(f) \cup {\rm Char}(P(x,D)),
\ee
where
$ \WF_{0,\infty}(u)=\bigcup_{\s>1}\bigcap_{\t>0}\WF_{\{\t,\s\}}(u)$.
\end{te}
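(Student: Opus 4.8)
The plan is to establish the two inclusions of \eqref{FundamentalEstimateNK} separately and then deduce the statement for $\WF_{0,\infty}$ by a reparametrization. Throughout I would work with the single cut-off characterization from Theorem \ref{TeoremaKarakterizacija}, so that ``$(x_0,\xi_0)\notin\WF$'' is replaced by a decay estimate of $\widehat{\phi u}(\xi)$ in a conic neighborhood $\Gamma_0$ of $\xi_0$ for one fixed $\phi\in\D_{\{\t,\s\}}^K$ with $\phi=1$ near $x_0$, and I would invoke Lemma \ref{LemaGlavnaTeorema} to pass from open conic neighborhoods to closed cones whenever a covering argument is needed.

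The left inclusion amounts to the statement that $P(x,D)$ does not enlarge the wave front set of class $\{2^{\s-1}\t,\s\}$. Arguing by contraposition, I would assume $(x_0,\xi_0)\notin\WF_{\{2^{\s-1}\t,\s\}}(u)$ and show that $f=P(x,D)u$ is microlocally regular of the same class there. Differentiation is harmless for the decay estimates, as it only multiplies $\widehat{\phi u}$ by the polynomial factor $\xi^\alpha$; and since $a_\alpha\in\E_{\{\t,\s\}}(U)\subseteq\E_{\{2^{\s-1}\t,\s\}}(U)$ and these classes are closed under multiplication, multiplication by $a_\alpha$ preserves microlocal regularity of class $\{2^{\s-1}\t,\s\}$. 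Concretely, I would write $\widehat{a_\alpha\phi u}$ as the convolution of the globally fast-decaying $\widehat{a_\alpha\psi}$ against $\widehat{\phi u}$ and split the integral into $|\eta|<\varepsilon|\xi|$ and $|\eta|\geq\varepsilon|\xi|$ exactly as in the proof of Theorem \ref{TeoremaKarakterizacija}; no loss in the index occurs here.

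The right inclusion is the microlocal elliptic estimate and the substance of the theorem. Fixing $(x_0,\xi_0)\notin\WF_{\{\t,\s\}}(f)\cup{\rm Char}(P)$, non-characteristicity provides a neighborhood $\Omega$ of $x_0$ and a cone $\Gamma$ of $\xi_0$ on which $C_1|\xi|^m\leq|P_m(x,\xi)|\leq C_2|\xi|^m$. The plan is to invert the principal symbol, where Corollary \ref{teoremaInverseClosedness} is decisive in keeping $1/P_m(x,\cdot)\in\E_{\{\t,\s\}}$, and then to construct for each $N$ a finite approximate solution by a Neumann-type iteration that corrects the leading inverse against the lower order part of $P$, truncated at a level governed by $N$. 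The derivatives of the resulting symbols I would control through the generalized Fa\'a di Bruno formula \eqref{GeneralizedFaaDiBruno} and Lemma \ref{modifiedFaa}, while the summation of the (effectively infinite order) parametrix series is handled through $\overline{(M.2)}$ of Lemma \ref{osobineM_p_s}; the latter is precisely where the index is forced up from $\t$ to $2^{\s-1}\t$, in agreement with Theorem \ref{TeoremaZatvorenostUltraDfOP}. Feeding the microlocal $\{\t,\s\}$-decay of $\widehat{\phi f}$ into this construction and optimizing over $N$ by enumeration then yields the $\{2^{\s-1}\t,\s\}$-decay of $\widehat{\phi u}$. I expect the principal difficulty to be exactly this bookkeeping: controlling the number of Fa\'a di Bruno summands together with the constants accumulated over the iteration so that the estimates close at the index $2^{\s-1}\t$ and no worse.

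Finally, the $\WF_{0,\infty}$ statement follows formally. Intersecting \eqref{FundamentalEstimateNK} over $\t>0$ for fixed $\s$ and using that $\t\mapsto2^{\s-1}\t$ is a bijection of $(0,\infty)$, the factor $2^{\s-1}$ is absorbed and one gets $\bigcap_{\t>0}\WF_{\{\t,\s\}}(f)\subseteq\bigcap_{\t>0}\WF_{\{\t,\s\}}(u)\subseteq\bigl(\bigcap_{\t>0}\WF_{\{\t,\s\}}(f)\bigr)\cup{\rm Char}(P)$. Taking the union over $\s>1$ and distributing it past the fixed set ${\rm Char}(P)$ then gives $\WF_{0,\infty}(f)\subseteq\WF_{0,\infty}(u)\subseteq\WF_{0,\infty}(f)\cup{\rm Char}(P)$, as claimed.
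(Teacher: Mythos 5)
Your proposal is correct and follows essentially the same route as the paper: the left inclusion is the pseudolocal property (which the paper simply cites from \cite{TT}), and the right inclusion is proved exactly as you outline --- the single-cutoff characterization of Theorem \ref{TeoremaKarakterizacija} together with Lemma \ref{LemaGlavnaTeorema}, inversion of $P_m$ via the inverse closedness of Corollary \ref{teoremaInverseClosedness}, a truncated Neumann-type approximate solution $w_N$ with error term $e_N$ built from the non-commuting operators $R_{j_1}\cdots R_{j_k}$, derivative control by Fa\'a di Bruno and Lemma \ref{modifiedFaa}, and the $\overline{(M.2)}$ property forcing the index up to $2^{\s-1}\t$, with a Paley--Wiener argument covering the region $|\xi|\leq N^{2^{\s-1}\t N^{\s-1}}$. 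The concluding $\WF_{0,\infty}$ statement is the same formal reparametrization $\t\mapsto 2^{\s-1}\t$ that the paper leaves implicit.
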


\begin{proof}
The  pseudolocal property $ \WF_{\{2^{\s-1}\t,\s\}}(f)\subseteq \WF_{\{2^{\s-1}\t,\s\}}(u)
$ is proved in  in \cite{TT}, see also \cite{PTT-02},
so it remains to prove the second inclusion in \eqref{FundamentalEstimateNK}.

Assume that $(x_0,\xi_0)\not \in\WF_{\{\t,\s \}}(f)\cup {\rm Char}(P(x,D))$.
Then there exists a compact set $K$ containing $x_0$
and a closed cone $ \Gamma$ containing $ \xi_0 $ such that
$P_m(x,\xi)\not=0$ when $(x,\xi) \in  K \times \Gamma$ and such that
$$(K\times \Gamma)\cap \Big(\WF_{\{\t,\s \}}(f)\cup {\rm Char}(P(x,D))\Big)=\emptyset.$$
Since $K$ is fixed, the distributions $u$ and $f$ involved in the proof are of finite order denoted by the same letter $M$ for the sake of simplicity.

Let $\phi\in \D_{\{\t,\s\}}^K$ such that $\phi=1$ on some neighborhood of $x_0$.
By  Theorem \ref{TeoremaKarakterizacija} it is enough to prove that
$$
|\widehat{\phi u}(\xi)| \leq A \frac{h^{N^{\s}}N^{2^{\s-1}\t N^{\s}}}{|\xi|^N}, \;\;\; \xi\in \Gamma,\,N\in \N.
$$

\par

We divide the proof in several steps.

\textbf{Step 1.} Since $u$ is of order $M$, Paley-Wiener type estimate (see Remark \ref{PaleyRemark}) implies
$$
|\xi|^{N}|\widehat{\phi u}(\xi)|
\leq A (N^{2^{\s-1}\t N^{\s-1}})^{N}(N^{2^{\s-1}\t N^{\s-1}})^M
\leq A h^{N^{\s}}N^{2^{\s-1}\t N^{\s}}, \;\;\; N \in \N,
$$
where $A,h>0$ do not depend on $N$, and the last inequality follows from $M 2^{\s-1}\t N^{\s-1}\ln N \leq M2^{\s-1}\t N^{\s}$ after taking the exponentials.  This gives the desired estimate when $|\xi| \leq N^{2^{\s-1}\t N^{\s-1}} $, $\xi \in \Gamma$.

\par

\textbf{Step 2.}
It remains to estimate  $ |\widehat{\phi u}(\xi)|$ when $\xi \in \Gamma$,  $|\xi| > N^{2^{\s-1}\t N^{\s-1}}$ and for $N \in \N$ large enough.
We refer to  Subsection \ref{subsecrepresNK} for calculations which lead to
\be
\label{fundamentalEqualityNK}
\phi(x) =
e^{ix\cdot\xi}  P^T (x,D) \left (
\frac{e^{-i x\cdot \xi}}{P_m(x,\xi)} w_N (x, \xi) \right)
+  e_N (x, \xi), \;\;\; x\in K,\xi \in \Gamma,
\ee
where
\begin{equation}
\label{ParcijalnaSumaNK}
w_N(x,\xi)
= \sum_{k \in \mathcal{K}_1} \sum_{\mathfrak{S}_k = 0} ^{N-m}
(R_{j_1}R_{j_2}\dots R_{j_k} \phi)(x,\xi),
\end{equation}
\be
\label{ParcijalnaSumaNK1}
e_N(x,\xi)= \sum_{k \in \mathcal{K}_2 }
\sum_{\mathfrak{S}_k =N -m+1}
^{N}
(R_{j_1}R_{j_2}\dots R_{j_k} \phi)(x,\xi),
\ee
$\mathfrak{S}_k =  j_1+j_2+ \dots + j_k$, $j_i\in \{1,\dots,m\}$, $1\leq i \leq k$, and we put
\be
\label{k1}
\mathcal{K}_1 = \{k\in \N\,|\, 0\leq mk\leq N-m\},
\ee
and
\be
\label{k2}
\mathcal{K}_2 =\{k\in \N\,|\, N-m< mk\leq N\}.
\ee

The functions $R_j$ in \eqref{ParcijalnaSumaNK} and  \eqref{ParcijalnaSumaNK1} can be written as
\be \label{Rjot}
 R_j(x,\xi)= \sum_{|\alpha|\leq j}c_{\alpha,j}(x,\xi)D^{\alpha},
\ee
for suitable functions $c_{\alpha,j}(x,\xi)$ which are homogeneous of order $-j$ (with respect to $\xi$) and
such that
\be
|D^{\beta}c_{\alpha,j}(x,\xi)|\leq |\xi|^{-j} A h^{|\beta|^{\s}} |\beta|^{\t |\beta|^{\s}}, \quad \beta\in \N^d, x\in K, \xi\in \Gamma\nonumber
\ee
for some $A,h>0$ and for all $|\alpha|\leq j$, see Subsections \ref{subsecrepresNK} and \ref{coeffestimate}.

From \eqref{fundamentalEqualityNK} it follows that
\begin{multline}
\label{ocenitiJednNK}
\widehat{\phi u}(\xi)  =  \int u(x) e_N (x, \xi) e^{-i x\xi} dx +
 \int u(x) P^T (x,D) \left (
\frac{e^{-i x\cdot \xi} w_N (x, \xi)}{P_m(x,\xi)}
\right)  dx \\
=  \int u(x) e_N (x, \xi) e^{-i x\xi} dx +
 \int P (x,D)  u(x) \left (
\frac{e^{-i x\cdot \xi} w_N (x, \xi)}{P_m(x,\xi)}
\right)  dx,
\end{multline}
$x\in K,$ $\xi \in \Gamma$, and in the next steps we estimate terms on the right hand side of \eqref{ocenitiJednNK}.

\par

\begin{rem}
\label{numberofterms}
Since the  number of summands in  $w_N(x,\xi)$ and $e_N(x,\xi)$ is the same as in the case when $R_j$ have constant coefficients
we refer to \cite[Subsection 4.1]{PTT-02} where it is shown that the upper bound for the number of summands
is of the form $A\cdot C^{N}$ for suitable constants $A,C>0$.  In fact, from  \cite[Subsection 4.1]{PTT-02} it follows that the number of summands in
$$
e_N(x,\xi)= \sum_{k \in \mathcal{K}_2 }
\sum_{\mathfrak{S}_k =\lf (N/\tilde \t)^{1/\s}\rf -m+1}
^{\lf (N/\tilde \t)^{1/\s}\rf}
(R_{j_1}R_{j_2}\dots R_{j_k} \phi)(x,\xi)
$$
is bounded by $A\cdot C^{\lf (N/\tilde \t)^{1/\s}\rf}$ and the calculations remain the same after replacing $\lf (N/\tilde \t)^{1/\s}\rf$
with $N$.
\end{rem}

\textbf{Step 3.}
Note that the operators $R_j$, $1\leq j\leq m$, given in \eqref{Rjot} do not commute. For that reason we must use different arguments than those given
in \cite{PTT-02} where the operators with constant coefficients were studied.
If $M$ denotes the order of distribution $u$, then the estimates of $D^\beta (R_{j_1}...R_{j_k}\phi)$
from Subsection \ref{subsecderivativesNK} (cf. (\ref{OcenaProizvodaOpNK}))
imply
\begin{equation}
\label{OcenaPrvogSabNK}
|{\langle u(x), e_{N}(x,\xi)e^{-i x\cdot\xi}\rangle}| \leq   A \sum_{|\alpha|\leq M}|D_x^{\alpha}(e_{N}(x,\xi)e^{-ix\xi})|
\end{equation}
$$
\leq A' |\xi|^M |\xi|^{m-N} h^{N^{\s}} (N+M)^{\t (N+M)^{\s}}
= A' \frac{h^{N^{\s}} (N+M)^{\t (N+M)^{\s}}}{|\xi|^{N-m-M}},
$$
$x\in K$, $\xi \in \Gamma, $ for suitable constants $A',h>0$, and $N\in \N$ large enough.
After enumeration $N\to N+m+M$ we conclude that (\ref{OcenaPrvogSabNK}) is equivalent to
$$
|{\langle u(x), e_{N}(x,\xi)e^{-ix\cdot\xi}\rangle}|
\leq A'\frac{h^{(N+m+N)^{\s}} (N+m+2M)^{\t (N+m+2M)^{\s}}}{|\xi|^{N}}$$
$$\leq A_1\frac{h_1^{N^{\s}} N^{\t N^{\s}}}{|\xi|^{N}} ,\quad x\in K, \xi \in \Gamma,$$ where for the last inequality we used $\overline{(M.2)'}$
of the sequence $M_p^{\t,\s}=p^{\t p^{\s}}$. This is the estimate for the first term on the righthand side of (\ref{ocenitiJednNK}).

\par

\textbf{Step 4.}
To  estimate the  second term on the righthand side of
(\ref{ocenitiJednNK}) for $|\xi|>N^{2^{\s-1}\t N^{\s-1}}$, note that since $(x_0,\xi_0)\not \in \WF_{\{\t,\s\}}(f)$,
by Lemma \ref{LemaGlavnaTeorema}, there exists a compact set $\tilde K \subset \subset U$ such that $\psi\in \D_{\{\t,\s\}}(U)$,
$\psi=1$ on a neighborhood of $\tilde K$, and a conical neighborhood $V$ of $\xi_0$ such that $\Gamma \subset V$ and
\be
\label{ocenaZaf}
|{\mathcal F} (\psi f)(\eta)|\leq A\frac{h^{N^{\s}} N^{\t N^{\s}}}{|\xi|^{N}},\quad
\eta\in V, N\in \N,
\ee for some $A,h>0$. In the sequel we write $v=\psi\,f$.
Since $w_N f=w_N v$ in $\D'(U)$, we have
\begin{multline*}
\langle f(\cdot) e^{-i\xi\cdot},w_N(\cdot,\xi)/P_m(\cdot,\xi)\rangle
= {\mathcal F}_{x\to \eta}(v(x)\frac{w_N(x,\xi)}{P_m(x,\xi)})(\xi) \\
=\int_{{\bf R}^d}{\mathcal F}(v)(\xi-\eta){\mathcal F}_{x\to \eta}(\frac{w_N(x,\xi)}{P_m(x,\xi)})(\eta,\xi)\,d\eta
=I_1+I_2,
\end{multline*}
where
\be
I_1=\int_{|\eta|<\varepsilon |\xi|}{\mathcal F}(v)(\xi-\eta){\mathcal F}_{x\to \eta}(\frac{w_N(x,\xi)}{P_m(x,\xi)})(\eta,\xi)\,d\eta,\quad\nonumber
\ee
\be
I_2=\int_{|\eta|\geq\varepsilon |\xi|}{\mathcal F}(v)(\xi-\eta){\mathcal F}_{x\to \eta}(\frac{w_N(x,\xi)}{P_m(x,\xi)})(\eta,\xi)\,d\eta,\nonumber
\ee
and $0< \varepsilon <1$ is chosen so that $\xi-\eta\in V$ when $\xi\in \Gamma,$ $|\xi|>N^{2^{\s-1}\t N^{\s-1}}$,
and $|\eta|<\varepsilon |\xi|$.

\textbf{Step 5.}
Let $j_1,\dots, j_k\in \{1,\dots,m\}$ be fixed. Since the coefficients of $P_m(\cdot,\xi)$ are in
$C^{\infty}(U)$, and $P_m(x,\xi)\not=0$ when $x\in K$ and $\xi\in \Gamma$, it follows that $\dss \frac{R_{j_1}R_{j_2}\dots R_{j_k}\phi(\cdot,\xi)}{P_m(\cdot,\xi)}$ belongs to $C^{\infty} (K)$ when $\xi \in \Gamma$, and moreover it is homogeneous of order $-m-\mathfrak{S}_k$.
Hence, by Paley-Wiener type estimates it follows that there exist a constant $C>0$, such that

\begin{eqnarray*}
|{\mathcal F}_{x\to \eta}\Big(\frac{R_{j_1}R_{j_2}\dots R_{j_k}\phi(x,\xi)}{P_m(x,\xi)}\Big)(\eta,\xi)|&\leq& C|\xi|^{-m-\mathfrak{S}_k}\langle \eta \rangle^{-d-1}\\
&\leq&  C \langle \eta \rangle^{-d-1}, \quad \eta\in \Rd,\nonumber
\end{eqnarray*}
when $\xi\in \Gamma$, $|\xi|>N^{2^{\s-1}\t N^{\s-1}}$.

\par

This estimate, and the estimate for number of terms in \eqref{ParcijalnaSumaNK}
(see remark \ref{numberofterms})
imply that there exist constants $A,C>0$ such that
\begin{eqnarray}
\label{OcenaDrugogIntNK1}
\Big|{\mathcal F}_{x\to \eta}\Big(\frac{{{w_N}}(x,\xi)}{P_m(x,\xi)}\Big)\Big|&\leq&
\sum_{k \in \mathcal{K}_1}
\sum_{\mathfrak{S}_k = 0} ^{N-m}\Big|
{\mathcal F}_{x\to \eta}\Big(\frac{R_{j_1}R_{j_2}\dots R_{j_k}\phi(x,\xi)}{P_m(x,\xi)}\Big)(\eta,\xi)\Big|\nonumber\\
&\leq & A C^{N} \langle \eta \rangle^{-d-1}.
\end{eqnarray}

Since $|\eta|<\varepsilon |\xi|$ $ \Rightarrow$ $|\xi-\eta|\geq (1-\varepsilon)|\xi|$,
by using \eqref{ocenaZaf} and \eqref{OcenaDrugogIntNK1}, we obtain the desired estimate for
$I_1$:
\begin{eqnarray*}
|I_1|&\leq&  \int_{|\eta|<\varepsilon |\xi|}|{\mathcal F} (v)(\xi-\eta)|
|{\mathcal F}_{x\to \eta}(\frac{{w_N}(x,\xi)}{P_m(x,\xi)}(\eta,\xi)|\,d\eta \nonumber\\
&\leq& \int_{|\eta|<\varepsilon |\xi|} A\frac{h^{N^{\s}} N^{\t N^{\s}}}{|\xi-\eta|^{N}}
|{\mathcal F}_{x\to \eta}(\frac{{w_N}(x,\xi)}{P_m(x,\xi)}(\eta,\xi)|\,d\eta \nonumber\\
&\leq& A\frac{h^{N^{\s}} N^{\t N^{\s}}}{((1-\varepsilon)|\xi|)^{N}}
\int_{\Rd} C^{N} \langle \eta \rangle^{-d-1}\,d\eta\nonumber\\
&\leq& A_1\frac{h_1^{N^{\s}} N^{\t N^{\s}}}{|\xi|^{N}},
\;\;\;\;\;\; \xi\in  \Gamma, |\xi|>N^{2^{\s-1}\t N^{\s-1}},
\end{eqnarray*}
for some constants $A_1,h_1>0$.

\par

\textbf{Step 6.}
It remains  to estimate $I_2$. We note that $|\xi-\eta|\leq (1+1/\varepsilon)|\eta|$ when $|\eta|\geq \varepsilon |\xi|$.
Since $f$ is a distribution of order $M$, the  Paley-Wiener type estimate for $v=\psi f\in\E'(U)$ implies that $ |{\mathcal F}(v)(\eta)|\leq C \langle\eta\rangle^M$, for some constant $C>0$.
Therefore
\begin{multline}
|I_2| \leq \int_{|\eta|\geq\varepsilon |\xi|}|{\mathcal F} (v)(\xi-\eta)|
|{\mathcal F}_{x\to \eta}(\frac{{w_N}(x,\xi)}{P_m(x,\xi)})(\eta,\xi)|\,d\eta \nonumber\\
\leq \int_{|\eta|\geq \varepsilon |\xi|}\langle\xi-\eta\rangle^M
\langle\eta\rangle^{N+d+1}
\frac{|{\mathcal F}_{x\to \eta}(\frac{{w_N}(x,\xi)}{P_m(x,\xi)})(\eta,\xi)|}{\langle\eta\rangle^{N+d+1}}
 \,d\eta \nonumber\\
\leq C^{N+1}
\frac{\sup_{\eta\in \Rd}\langle \eta \rangle^{N+M+d+1}
|{\mathcal F}_{x\to \eta}(\frac{{w_N}(x,\xi)}{P_m(x,\xi)})(\eta,\xi)|}{\langle\xi\rangle^{N}},
\end{multline}
when $\xi\in  \Gamma$, $|\xi|>N^{2^{\s-1}\t N^{\s-1}}$.

To finish the proof, it remains to show that $\xi\in  \Gamma$, $|\xi|>N^{2^{\s-1}\t N^{\s-1}}$,
implies that there exist constants  $A,h>0$ such that
\be
\label{ZelimodaOcenimoNK}
\sup_{\eta \in \Rd}\langle \eta \rangle^{N+M+d+1}
|{\mathcal F}_{x\to \eta}(\frac{{w_N}(x,\xi)}{P_m(x,\xi)})(\eta,\xi)|\leq A h^{N^{\s}} N^{2^{\s-1}\t N^{\s}},
\ee
for  a sufficiently large $N\in \N$, and then we use this estimate to bound $|I_2|.$

Arguing in the similar way as in the proof of \cite[Theorem 1.1]{PTT-02}, it is sufficient to prove
\be
\label{NKocena}
\sup_{x\in K}| D^{\beta}\Big(\frac{{w_N}(x,\xi)}{P_m(x,\xi)}\Big) \leq A h^{N^{\s}} N^{2^{\s-1}\t N^{\s}},\quad \beta\in \N^d, |\beta|=N+M+d+1,
\ee
for some constants $A,h>0$, when $\xi\in  \Gamma$, $|\xi|>N^{2^{\s-1}\t N^{\s-1}}$.
Recall (see Subsection \ref{coeffestimate}),
\be
\sup_{x \in K}\Big|D^{\gamma}\frac{1}{P_m(x,\xi)}\Big|\leq |\xi|^{-m}C^{|\gamma|^{\s}+1}|\gamma|^{\t |\gamma|^{\s}}, \quad \gamma \in \N^d, \xi \in \Gamma,\nonumber
\ee
for some constant $C>0$.
Moreover, from \eqref{OcenaProizvodaOpNK} (see Subsection \ref{subsecderivativesNK}) it follows that
\begin{multline}
\sup_{x \in K}|D^{\gamma}w_N(x,\xi)|\leq Ah ^{N^{\s}}
\sum_{k \in \mathcal{K}_1}
\sum_{\mathfrak{S}_k = 0} ^{N-m}|\xi|^{-\mathfrak{S}_k} (\mathfrak{S}_k+|\gamma|)^{\t (\mathfrak{S}_k+|\gamma|)^{\s}},\nonumber
\end{multline} for some constants $A,h>0$, when $\xi \in \Gamma$.

Hence, for $x\in K$ and $\xi\in \Gamma$, $|\xi|>N^{2^{\s-1}\t N^{\s-1}}$, we obtain
\begin{multline}
\label{NKracun}
\left |D^{\beta}\frac{{w_N}(x,\xi)}{P_m(x,\xi)} \right |\leq
\sum_{k \in \mathcal{K}_1}
\sum_{\gamma\leq \beta}{\beta\choose \gamma}|D^{\beta-\gamma}\frac{1}{P_m(x,\xi)}||D^{\gamma}w_N(x,\xi)|\\
\leq Ah ^{N^{\s}}\sum_{\gamma\leq \beta}\sum_{k \in \mathcal{K}_1}\sum_{\mathfrak{S}_k = 0} ^{N-m} |\xi|^{-\mathfrak{S}_k-m}
{\beta\choose \gamma} C^{|\beta-\gamma|^{\s}+1}|\beta-\gamma|^{\t |\beta-\gamma|^{\s}}
(\mathfrak{S}_k+|\gamma|)^{\t (\mathfrak{S}_k+|\gamma|)^{\s}}\\
\leq A'h ^{'N^{\s}}\sum_{\gamma\leq \beta} \sum_{k \in \mathcal{K}_1}\sum_{\mathfrak{S}_k = 0} ^{N-m} {\beta\choose \gamma} |\xi|^{-\mathfrak{S}_k}(\mathfrak{S}_k+|\beta|)^{\t (\mathfrak{S}_k+|\beta|)^{\s}},
\end{multline}
for $\beta \in \N^d$, $|\beta|=N+M+d+1$, where we used $(M.1)$ property of the sequence $M_p^{\t,\s}=p^{\t p^{\s}}$.

Since $\mathfrak{S}_k\leq N-m$ it follows that $N>\mathfrak{S}_k$ and therefore
$$|\xi|>N^{2^{\s-1}\t N^{\s-1}}>{\mathfrak{S}_k}^{2^{\s-1}\t {\mathfrak{S}_k}^{\s-1}}.$$
Now $\overline{(M.2)}$ property of $M_p^{\t,\s}=p^{\t p^{\s}}$ implies

\begin{multline}
\label{NKocena1}
 |\xi|^{-\mathfrak{S}_k}(\mathfrak{S}_k+|\beta|)^{\t (\mathfrak{S}_k+|\beta|)^{\s}}\leq \frac{(\mathfrak{S}_k+|\beta|)^{\t (\mathfrak{S}_k+|\beta|)^{\s}}}{{\mathfrak{S}_k}^{2^{\s-1}\t {\mathfrak{S}_k}^{\s}}}\\
 \leq C^{{\mathfrak{S}_k}^{\s}+|\beta|^{\s}}\frac{{\mathfrak{S}_k}^{2^{\s-1}\t {\mathfrak{S}_k}^{\s}}{|\beta|^{2^{\s-1}\t |\beta|^{\s}}}}{{\mathfrak{S}_k}^{2^{\s-1}\t {\mathfrak{S}_k}^{\s}}}\\
 = C^{{\mathfrak{S}_k}^{\s}+|\beta|^{\s}} (N+M+d+1)^{2^{\s-1}\t (N+M+d+1)^{\s}}\leq C_1^{ N^{\s}} N^{2^{\s-1}\t N^{\s}},
\end{multline}
for some constant $C_1>0$ where the last inequality follows from $\overline{(M.2)'}$ property of $M_p^{\t,\s}$. Using the estimate for number of terms in $w_N$, by \eqref{NKracun} and \eqref{NKocena1}, the estimate \eqref{NKocena} follows.

By the similar arguments as in the proof of  \cite[Theorem 1.1]{PTT-02}, (\ref{ZelimodaOcenimoNK}) follows from \eqref{NKocena}  since
$$\pi_1(\supp\frac{{w_N}(x,\xi)}{P_m(x,\xi)})\subseteq K.$$

Therefore
\be
\label{poslednjiUslov1NK}
|I_2|\leq A \frac{h^{N^{\s}} N^{2^{\s-1}\t N^{\s}}}{|\xi|^{N}}\,
\ee
for suitable constants $A,h>0$ and $N$ sufficiently large, and the theorem is proved.
\end{proof}

\subsection{Representing $\widehat{\phi u} (\xi)$ by an approximate solution} \label{subsecrepresNK}
In this subsection we derive \eqref{fundamentalEqualityNK}, \eqref{ParcijalnaSumaNK} and \eqref{ParcijalnaSumaNK1}.

Let $P^T (x,D)=\dss\sum_{|\alpha|\leq m} b_{\alpha}(x) D^{\alpha}$,
$b_{\alpha}(x)\in \E_{\{\t,\s\}}(U)$ be the transpose of $P(x,D)$.
If $v(x,\xi)$ is the solution of the equation
\begin{equation}
\label{ResitiJednacinuNK}
e^{i x\xi} P^T (x,D) v(x, \xi)
= \phi(x), \quad x\in K, \xi\in \Gamma,
\end{equation}
then
$$
\widehat{\phi u}(\xi)  =  \int u(x) \phi(x)e^{-i x\xi} dx
=  \int u(x) P^T (x,D) v(x,\xi) dx, \;\;\;\xi \in \Gamma.
$$

Similarly as in \cite{HermanderKnjiga} and \cite{PTT-01} we may assume that
$ \displaystyle v (x, \xi) = \frac{e^{-i x\xi}w(x, \xi)}{P_m(x,\xi)}, $
for some $ w (\cdot,\xi) \in C^{\infty}(K),$
so that the left hand side of
\eqref{ResitiJednacinuNK} becomes
\begin{multline}
e^{i x\xi} P^T (x,D)(\frac{w(x,\xi)e^{-i x\xi}}{{P_m(x,\xi)}}) \nonumber \\
= e^{i x\xi} \sum_{|\alpha|\leq m}\sum_{\beta\leq\alpha} {\alpha \choose \beta}b_{\alpha}(x) D^{\alpha-\beta}(e^{-i x\xi}) D^{\beta}\Big(\frac{w(x,\xi)}{P_m(x,\xi)}\Big)   \nonumber \\
= \sum_{|\alpha|\leq m} \sum_{\beta\leq\alpha}\sum_{\gamma\leq\beta} {\alpha \choose \beta}{\beta \choose \gamma}b_{\alpha}(x)(-\xi)^{\alpha-\beta}
D^{\gamma}\Big(\frac{1}{P_m(x,\xi)}\Big) D^{\beta-\gamma}w(x,\xi),
\end{multline}
\be \label{HermanderRacunNK}
= ( I -R(x,\xi)) w(x,\xi), \;\;\;
x\in K, \xi\in \Gamma,
\ee
where
\be
\label{ReprezentacijaRJ}
R(x,\xi)=\sum_{j=1}^m R_j(x,\xi),\quad \dss R_j(x,\xi)= \sum_{|\alpha|\leq j}c_{\alpha,j}(x,\xi)D^{\alpha},\nonumber
\ee
for suitable functions $c_{\alpha,j}(x,\xi)$ which are homogeneous of order $-j$ and
\be
\label{OcenaKoeficijentiRj}
|D^{\beta}c_{\alpha,j}(x,\xi)|\leq |\xi|^{-j} A h^{|\beta|^{\s}} |\beta|^{\t |\beta|^{\s}}, \quad \beta\in \N^d, x\in K, \xi\in \Gamma
\ee
for some $A,h>0$ and for all $|\alpha|\leq j$. We refer to Subsection \ref{coeffestimate} for the calculus which shows how
\eqref{HermanderRacunNK} implies \eqref{OcenaKoeficijentiRj}.

Therefore \eqref{ResitiJednacinuNK} can be rewritten in the following convenient form:
\be
\label{JednacinaKojuResavamoNK}
(I -R(x,\xi))w(x,\xi)= \phi(x)\,
\quad x\in K, \xi \in \Gamma.
\ee
which gives rise to approximate solutions as follows.

\par

Note that the order of operator $R^k$, $k\in \N$, is $mk$. We compute
$$
\sum_{k \in \mathcal{K}_1}R^k- R \sum_{k \in \mathcal{K}_1}R^{k} = \sum_{k \in \mathcal{K}_1}R^k-\sum_{k \in \mathcal{K}_1}R^{k+1}
$$
\be
\label{RacunOperatoriNK}
= \sum_{k \in \mathcal{K}_1}R^k-\sum_{\{k\in \N\,|\, m\leq mk\leq N\}}R^k =I-\sum_{k \in \mathcal{K}_2}
R^k
\ee
where $\mathcal{K}_1$ is given by \eqref{k1} and in the last equality we used
$$
\mathcal{K}_1
\cap\{k\in \N\,|\, m\leq mk\leq N\}
=\{k\in \N\,|\, m\leq mk\leq N-m\}.
$$
Moreover, since the operators $R_j$, $1\leq j\leq m$, do not commute we can write
$$
\sum_{k \in \mathcal{K}_1} R^k= \sum_{k \in \mathcal{K}_1}\sum_{\mathfrak{S}_k =0} ^{N-m}
R_{j_1}R_{j_2}\dots R_{j_k},
$$
and
$$
\sum_{k \in \mathcal{K}_2} R^k= \sum_{k \in \mathcal{K}_2}
\sum_{\mathfrak{S}_k =N-m+1} ^{N}
R_{j_1}R_{j_2}\dots R_{j_k}
$$
where $\mathfrak{S}_k = j_1+j_2+\dots+j_k$, $j_i\in \{1,\dots,m\}$, $1\leq i\leq k$.

Now,
\begin{multline}
(I -R(x,\xi)) (\sum_{k \in \mathcal{K}_1}R^k \phi (x))
= (\sum_{k \in \mathcal{K}_1}R^k- R \sum_{k \in \mathcal{K}_1}R^{k})\phi (x)
\nonumber \\
= (I-\sum_{k \in \mathcal{K}_2}R^{k})\phi (x) = \phi (x) - \sum_{k \in \mathcal{K}_2}R^{k} \phi (x),
\end{multline}
and if we put $ w_N = \sum_{k \in \mathcal{K}_1}R^k \phi$ and $ e_N = \sum_{k \in \mathcal{K}_2}\phi$
we conclude that
$$
(I-R)w_N(x,\xi)=\phi(x)-e_N(x,\xi),\quad N\in \N, x\in K, \xi\in \Gamma,
$$
with $w_N$ and $e_N$ given by \eqref{ParcijalnaSumaNK} and \eqref{ParcijalnaSumaNK1} respectively.


\subsection{Estimates for $c_{\alpha,j}(x,\xi)$} \label{coeffestimate}
In this subsection we show that \eqref{HermanderRacunNK} implies \eqref{OcenaKoeficijentiRj}.
An essential argument in
this part of the proof is the inverse-closedness property presented in Theorem \ref{teoremaInverseClosedness}.

Recall,
\be
\label{OcenaPM1}
D^{\alpha}\Big(\frac{1}{P_m(x,\xi)}\Big)=\alpha! \sum_{(s,p,j)\in \pi} \frac{(-1)^j j!}{(P_m(x,\xi))^{j+1}}\prod_{k=1}^{s}\frac{1}{j_k!}\Big(\frac{1}{p_k !}D^{p_k} P_m(x,\xi) \Big)^{j_k},
\ee
for $\alpha\in \N^d$, where sum is taken over all decompositions $(s,p,j)$ of the form
\be
\label{oblikParticijaFaaGlavnaTeorema}
\alpha=j_1 p_1+j_2 p_2+\dots+j_s p_s,\nonumber
\ee with $\dss j=\sum_{i=1}^{s} j_i\in\{0,1,\dots, |\alpha|\}$, $p_i\in \N^d$, $|p_i|\in\{1,\dots,|\alpha|\}$ for $i\in \{1,\dots,s\}$, $s\leq |\alpha|$. (see Subsection \ref{subsecInverse})

Since the coefficients of $P_m(x,\xi)$ belong to $\E_{\{\t,\s\}}(U)$ it follows
that
\be
\label{OcenaPM2}
\sup_{x\in K}|D^{p_k}P_m(x,\xi)|\leq A h^{|p_k|^{\s}}|p_k|^{\t |p_k|^{\s}}|\xi|^m,
\ee
for some $A,h>0$.
Moreover, from $(K\times \Gamma) \cap {\rm Char}(P)=\emptyset$ it follows that
\be
\label{OcenaPM3}
\sup_{x\in K}|P_m(x,\xi)|\geq C'|\xi|^{m}.
\ee

Hence, by using \eqref{OcenaPM1}, \eqref{OcenaPM2} and \eqref{OcenaPM3}  we obtain
\begin{eqnarray*}
\label{InverseClosed}
|D^{\alpha}\Big(\frac{1}{P_m(x,\xi)}\Big)|&\leq & |\alpha|!\sum_{(s,p,j)\in \pi}\frac{j!}{j_1!\dots j_s! |P_m(x,\xi)|^{j+1}} \\
&\times & \prod_{k=1}^{s}\Big(\frac{1}{p_k!}|D^{p_k} P_m(x,\xi)| \Big)^{j_k}\\
&\leq & |\alpha|!\sum_{(s,p,j)\in \pi}\frac{|\xi|^{m j} j!}{|\xi|^{m(j+1)}j_1!\dots j_s! |P_m(x,\xi)|^{j+1}}\\
&\times & \prod_{k=1}^{s}\Big(\frac{1}{p_k!}A h^{|p_k|^{\s}}|p_k|^{\t |p_k|^{\s}} \Big)^{j_k}\\
&\leq &|\xi|^{-m} A' h^{'|\alpha|^{\s}+1} |\alpha|^{\t |\alpha|^{\s}},
\end{eqnarray*}
for some $A,A'  $, $h,h'>0$,
where the last inequality follows by calculation from the proof of Theorem \ref{teoremaInverseClosedness}.

In particular, we have proved that $\dss \frac{1}{P_m(\cdot,\xi)}\in \E_{\{\t,\s,h\}}(K)$ for some $h>0$ and for every $\xi\in \Gamma$. From the algebra property of extended Gevrey classes it follows that $\dss b_{\alpha}(\cdot)\partial^{\gamma}\frac{1}{P_m(\cdot,\xi)}\in \E_{\{\t,\s,h'\}}(K)$ for some $h'>0$, where $|\gamma|\leq|\alpha|\leq m$ and $b_{\alpha}(x)$ are the coefficients of $P^T(x, D)$.

These estimates, together with  \eqref{HermanderRacunNK} give  \eqref{OcenaKoeficijentiRj}.

\subsection{Estimates for $ D^\beta ( R_{j_1}...R_{j_k}\phi)$} \label{subsecderivativesNK}
In this subsection we  follow the idea presented in \cite[Lemmas 8.6.2 and 8.6.3]{HermanderKnjiga}.
As in Subsection \ref{subsecrepresNK} we put
$$
\mathfrak{S}_k=j_1+\dots+j_k, \quad N-m\leq \mathfrak{S}_k\leq N,
$$
for $k\in \N$ such that $mk \leq  N$, and let $|\beta|\leq M$ where $M$ is order of distribution $u$.

Recall, $ \displaystyle R_j(x,\xi)= \sum_{|\alpha|\leq j}c_{\alpha,j}(x,\xi)D^{\alpha},$
and note that by successive applications of the Leibniz rule $D^{\beta} (R_{j_1}...R_{j_k}\phi)$ can be written as a sum of terms of the form
$$
(D^{\gamma_0}c_{\alpha_{j_1},j_1}(x,\xi))(D^{\gamma_1}c_{\alpha_{j_2},j_2}(x,\xi))\dots(D^{\gamma_{k-1}}c_{\alpha_{j_k},j_k}(x,\xi))(D^{\gamma_k}\phi(x)).
$$
Put $a_i = |\gamma_i|$ so that
\be
\label{Suma1}
a_0+\dots+a_k= \mathfrak{S}_k+|\beta|,
\ee
\be
\label{Suma2}
a_0\leq |\beta|,
\ee
and
\be
\label{Suma3}
a_i\leq \sum_{t=1}^i j_t +|\beta|, \quad 1\leq i\leq k.
\ee
From \eqref{OcenaKoeficijentiRj} it follows that
\be
|D^{\gamma_{i-1}}c_{\alpha_{j_i},j_i}(x,\xi)|\leq |\xi|^{-j_i} A h^{a_{i-1}^{\s}} a_{i-1}^{\t a_{i-1}^{\s}}, \quad \gamma_{i-1}\in \N^d, x\in K, \xi\in \Gamma,\nonumber
\ee
for some constants $A,h>0$ and for all $|\alpha_{j_i}|\leq j_i$, $i=1,\dots, k$.

Observe that the number of multiindices $\gamma_0,\dots,\gamma_k$ with the property \eqref{Suma1} is $\dss {\mathfrak{S}_k+|\beta|\choose a_0,\dots, a_k}$. In the sequel we write $\dss \sum$ when the sum is taken over all multiindices $\gamma_0,\dots,\gamma_k$ which satisfies \eqref{Suma1}-\eqref{Suma3}.

Since $\phi \in \D_{\{\t,\s\}}^K$, for $x\in K$ and $\xi \in \Gamma$, we estimate
\begin{multline}
|(D^\beta R_{j_1}...R_{j_k}\phi)(x,\xi)|\leq\\ \sum{\mathfrak{S}_k+|\beta|\choose a_0,\dots, a_k}\Big(\prod_{i=1}^k|D^{\gamma_{i-1}}c_{\alpha_{j_i},j_i}(x,\xi)|\Big)\cdot |D^{\gamma_k}\phi (x)|\nonumber\\
\leq |\xi|^{-\mathfrak{S}_k} \sum{\mathfrak{S}_k+|\beta|\choose a_0,\dots, a_k}\Big(\prod_{i=1}^k A h^{a_{i-1}^{\s}} a_{i-1}^{\t a_{i-1}^{\s}}\Big)\cdot \Big(A h^{{a_k}^{\s}}{a_k}^{\t {a_k}^{\s}}\Big)  \nonumber\\
\leq |\xi|^{m-N} A^{\frac{N}{m} +1}  h^{N^{\s}} \\ \sum{\mathfrak{S}_k+|\beta|\choose a_0,\dots, a_k}\Big(\prod_{i=1}^{k+1} a_{i-1}^{\t a_{i-1}^{\s}}\Big)\\
\leq |\xi|^{m-N} A' h^{' N^{\s}} \sum{\mathfrak{S}_k+|\beta|\choose a_0,\dots, a_k}\Big(\prod_{i=1}^{k+1} a_{i-1}^{\t a_{i-1}^{\s}}\Big),
\end{multline}
for some $A',h'>0$. By the almost increasing property of $M_p^{\t,\s}=p^{\t p^{\s}}$ it follows that
$$
\prod_{i=1}^{k+1} a_{i-1}^{\t a_{i-1}^{\s}}\leq C^{a_0+\dots+a_{k}}\frac{a_0!\cdot\cdot\cdot a_{k}!}{(a_0+\dots+a_{k})!}(a_0+\dots+a_{k})^{\t (a_0+\dots+a_{k})^{\s}}$$
$$
=C^{\mathfrak{S}_k+|\beta|} \frac{a_0!\cdot\cdot\cdot a_{k}!}{(\mathfrak{S}_k+|\beta|)!}(\mathfrak{S}_k+|\beta|)^{\t (\mathfrak{S}_k+|\beta|)^{\s}},
$$ for some $C>0$, wherefrom
\begin{multline}
\sum{\mathfrak{S}_k+|\beta|\choose a_0,\dots, a_k}\Big(\prod_{i=1}^{k+1} a_{i-1}^{\t a_{i-1}^{\s}}\Big)
\leq \sum
\frac{a_0!\cdot\cdot\cdot a_{k}!}{(\mathfrak{S}_k+|\beta|)!}\nonumber \\
\cdot C^{\mathfrak{S}_k+|\beta|}\frac{(\mathfrak{S}_k+|\beta|)!}{a_0!\dots  a_k!}
(\mathfrak{S}_k+|\beta|)^{\t (\mathfrak{S}_k+|\beta|)^{\s}}\nonumber\\
=C^{\mathfrak{S}_k+|\beta|}(\mathfrak{S}_k+|\beta|)^{\t (\mathfrak{S}_k+|\beta|)^{\s}}\sum_{a_0+\dots+a_k=\mathfrak{S}_k+|\beta|} \,1.\nonumber\\
\leq C^{N}(N+M)^{\t (N+M)^{\s}}{\mathfrak{S}_k+|\beta|-1\choose k }\leq C^{'N}(N+M)^{\t (N+M)^{\s}}
\end{multline} for suitable $C'>0$.

Hence we conclude that there exist  constants $A,h>0$ such that
\begin{equation}
\label{OcenaProizvodaOpNK}
|(D^\beta R_{j_1}...R_{j_k}\phi)(x,\xi)|
\leq A |\xi|^{m-N} h^{N^{\s}} (N+M)^{\t (N+M)^{\s}},
\end{equation}
$x\in K,$ $\xi\in \Gamma,$ which gives the desired estimate.


\begin{thebibliography}{9}

\bibitem{ABM} P. Albano, A. Bove, M. Mughetti:
\textit{Analytic Hypoellipticity for sums of squares and the Treves conjecture}, Arxiv, 12 May 2016.


\bibitem{BMT} R.~W. Braun, R. Meise, B.~A. Taylor,
\textit{Ultra-differentiable functions and Fourier analysis}, Results Math. \textbf{17} (3-4),
 206--237,  (1990)

%
%
%
\bibitem{Chen} H. Chen, L. Rodino,\textit{General theory of PDE and Gevrey classes in General theory of partial differential equations and microlocal analysis}.Pitman Res. Notes Math. Ser., Longman, Harlow, \textbf{349} (1996), 6--81.
%
%
%
%
%

%
%
%
%
%
%

%




%


\bibitem{CarmenAntonio} C. Fern\'andez; A. Galbis
\textit{Superposition in classes of ultradifferentiable functions},
Publ. Res. Inst. Math. Sci. \textbf{42 (2)} (2006), 399–419.

\bibitem{CL}
M. Cicognani; D. Lorentz, \emph{Strictly hyperbolic equations with  coefficients low-regular win time and smooth in space},
arXiv:1611.09548v2, 30Nov2016.

\bibitem{HermanderKnjiga} L. H\"ormander,  \textit{The Analysis of Linear Partial Differential Operators. Vol. I:
Distribution Theory and Fourier Analysis}, Springer-Verlag, 1983.


\bibitem{Herm} L. H\"ormander: \textit{A counterexample of Gevrey class to the uniqueness of the Cauchy problem},
Math. Research Letters \textbf{7} (2000) 615--624.



\bibitem{Mostefa} M. Ider
\textit{On the superposition of functions in Carleman classes}, Bull. Austral. Math. Soc. \textbf{39 (3)} (1989), 471–476.

\bibitem{Klotz} A. Klotz,  \textit{Inverse closed ultradifferential subalgebras},J. Math. Anal. Appl. \textbf{409 (2)} (2014),  615–-629.

\bibitem{Komatsuultra1} H. Komatsu, \textit{Ultradistributions, I:
Structure theorems and a characterization}. J. Fac. Sci. Univ. Tokyo,
Sect. IA Math., \textbf{20 (1)} (1973), 25--105.

\bibitem{KomatsuNotes} H. Komatsu, \textit{An introduction to the theory of generalized functions}. Lecture notes,
Department of Mathematics Science University of Tokyo 1999.


\bibitem{LO} O. Liess, Y. Okada,
\textit{Ultra-differentiable classes and intersection theorems}
Math. Nachr. \textbf{287}, 638--665   (2014)



\bibitem{Tsoy} T. Ma, \textit{Higher Chain formula proved by Combinatorics}, The Electronic Journal of Combinatorics, \textbf{16} (2009), \#N21


\bibitem{Menon} G. Menon: \textit{Gevrey class regularity for the attractor of the laser equations}, Nonlinearity
\textbf{12 (6)} (1999),  1505–-1510.



%

\bibitem{PTT-01} S. Pilipovi\'c, N. Teofanov, and F. Tomi\'c,
\textit{On a class of ultradifferentiable functions}. Novi Sad Journal of Mathematics,
\textbf{45 (1)} (2015), 125--142.

\bibitem{PTT-02} S. Pilipovi\'c, N. Teofanov, and F. Tomi\'c,
\textit{Beyond Gevrey regularity}. Journal of Pseudo-Differential Operators and Applications, 7, (2016), 113--140.




\bibitem{RS} A. Rainer, G. Schindl,
\textit{Composition in ultradifferentiable classes},
 Studia Math., \textbf{224} 2, 97 -- 131  (2014)

\bibitem{Rauch} J. Rauch, \textit{Partial Differential Equations}. Springer-Verlag, 1991.

\bibitem{Rodino} L. Rodino, \textit{Linear Partial Differential Operators in Gevrey Spaces}. World Scientific, 1993.


\bibitem{rudin} W. Rudin, \textit{Division in algebras of infinitely differentiable functions},
J. Math. Mech., \textbf{11} (1962), 797--810.

\bibitem{Siddiqi} J. A. Siddiqi,  \textit{Inverse-closed Carleman algebras of infinitely differentiable functions},
Proc. Amer. Math. Soc.  \textbf{109 (2)} (1990), 357-–367.


%

\bibitem{TT} N. Teofanov, F. Tomi\'c:
\textit{Ultradifferentiable functions of class $M^{\t,\s} _p$ and microlocal regularity},
to appear in Generalized functions and Fourier analysis, Birkh\"auser, (2017).

%



\end{thebibliography}
\end{document}